\newtheorem{Thm}{Theorem}[section]
\newtheorem{Lem}[Thm]{Lemma}
\newtheorem{Prop}[Thm]{Proposition}
\newtheorem{Rem}[Thm]{Remark}
\numberwithin{equation}{section}
\newcommand{\R}{\mathbb{R}}
\newcommand{\N}{\mathbb{N}}
\newcommand{\cC}{{\mathcal C}}
\newcommand{\cD}{{\mathcal D}}
\newcommand{\cF}{{\mathcal F}}
\newcommand{\cL}{{\mathcal L}}
\newcommand{\cM}{{\mathcal M}}
\newcommand{\cN}{{\mathcal N}}
\newcommand{\al}{\alpha}
\newcommand{\ga}{\gamma}
\newcommand{\de}{\delta}
\newcommand{\la}{\lambda}
\newcommand{\De}{\Delta}
\newcommand{\Ga}{\Gamma}
\newcommand{\Om}{\Omega}
\newcommand{\om}{\omega}
\newcommand{\vphi}{\varphi}
\newcommand{\eps}{\varepsilon}
\newcommand{\si}{\sigma}
\newcommand{\Si}{\Sigma}
\newcommand{\pa}{\partial}
\newcommand{\wt}{\widetilde}
\newcommand{\cat}{\text{\rm cat}}
\newcommand{\vol}{\text{\rm vol}}
\newcommand{\id}{\text{\rm id}}
\newcommand{\sign}{\text{\rm sign\,}}
\newcommand{\weakto}{\rightharpoonup}
\newcommand\mytop[2]{\genfrac{}{}{0pt}{}{#1}{#2}}
\newcommand{\beq[1]}{\begin{equation}\label{eq:#1}}
\newcommand{\eeq}{\end{equation}}
\newenvironment{altproof}[1]
{\noindent
{\em Proof of {#1}}.}
{\nopagebreak\mbox{}\hfill $\Box$\par\addvspace{0.5cm}}
\begin{document}

\bibliographystyle{plain}

\title{Equilibria of vortex type Hamiltonians on closed surfaces}

\author{\sc{Mohameden Ahmedou}\footnote{Supported by DFG grant AH 156/2-1.},
\sc{Thomas Bartsch}\footnote{Supported by DFG grant BA 1009/19-1.} ,
\sc{Tim Fiernkranz}
}

\date{}
\maketitle
\begin{abstract}
 We prove the existence of critical points of vortex type Hamiltonians
 \[
   H(p_1,\ldots, p_N)
     = \sum_{\mytop{i,j=1}{i\ne j}}^N \Ga_i\Ga_jG(p_i,p_j)+\Psi(p_1,\dots,p_N)
 \]
on a closed Riemannian surface $(\Si,g)$ which is not homeomorphic to the sphere or the projective plane. Here $G$ denotes the Green function of the Laplace-Beltrami operator in $\Si$, $\Psi:\Si^N\to\R$ may be any function of class $\cC^1$, and $\Ga_1,\dots,\Ga_N\in\R\setminus\{0\}$ are the vorticities. The Kirchhoff-Routh Hamiltonian from fluid dynamics corresponds to $\Psi(p) = -\sum_{i=1}^N \Ga_i^2h(p_i,p_i)$ where $h:\Si\times\Si\to\R$ is the regular part of the Laplace-Beltrami operator. We obtain critical points $p=(p_1,\dots,p_N)$ for arbitrary $N$ and vorticities $(\Ga_1,\dots,\Ga_N)$ in $\R^N\setminus V$ where $V$ is an explicitly given algebraic variety of codimension 1.
\end{abstract}

{\bf Keywords}: point vortex Hamiltonian, point vortex equilibria, counter-rotating vortices, mean field equations, sinh-Poisson equation, blow-up solutions.

{\bf  AMS subject classification}:  37J12, 35J15, 35J25, 35J60, 35R01, 76B47.


%
\section{Introduction}\label{sec:intro}
Let $(\Si,g)$ be a closed (compact and without boundary) Riemannian surface, and let $G$ be the Green function of the Laplace-Beltrami operator $\De_g$, i.e.\ for every $p\in\Si$ there holds:
\begin{equation}\label{eq:Green}
\left\{
\begin{aligned}
  &\De_g G(p,\cdot) = \de_p-\frac1{\vol_g(\Si)}\qquad\text{in the distributional sense}\\
  &\int_\Si G(p,\cdot)\, dv_g = 0
\end{aligned}
\right.
\end{equation}
where $dv_g$ denotes the area element of $(\Si,g)$; see \cite[\S~4.2]{Aubin:1998}. $G$ is uniquely determined by \eqref{eq:Green} and can be written as 
\[
  G(p,q) = -\frac{1}{2\pi} \log d_g(p,q) - h(p,q)
\]
with $d_g$ denoting the distance function in $\Si$, and $h:\Si\times\Si\to\R$ the regular part of $G$; see \cite[Theorem~4.13]{Aubin:1982}. The paper deals with the existence of critical points of vortex type Hamiltonians
\[
  H:\cF_N\Si = \big\{(p_1,\dots,p_N)\in\Si^N: \text{ $p_i\ne p_j$ for $i\ne j$}\big\} \to \R
\]
of the form
\begin{equation}\label{eq:ham}
  H(p_1,\ldots, p_N) = \sum_{\mytop{i,j=1}{i\ne j}}^N \Ga_i\Ga_jG(p_i,p_j) + \Psi(p_1,\dots,p_N).
\end{equation}
Here $\Psi:\Si^N\to\R$ is any $\cC^1$ function, and $\Ga_1,\dots,\Ga_N\in\R\setminus\{0\}$ are fixed, the {\it vortex strengths}. In the fluid dynamics context one has $\Psi = \Psi_{KR} = -\sum_{i=1}^N \Ga_i^2h(p_i,p_i)$. The corresponding Hamiltonian $H=H_{KR}$ is the Kirchhoff-Routh Hamiltonian.

If $\Si$ is orientable the area form $\Om_g$ induces a symplectic form on $\cF_N\Si$:
\[
  \om = \sum_{i=1}^N\Ga_i\pi_i^*(\Om_g)\qquad\text{with $\pi_i:\cF_N\Si \to \Si$ the projection onto the $i$-th factor.}
\]
The dynamics of $N$ point vortices is then described by the Hamiltonian system
\[\tag{HS}
  \dot p = X_{H} (p) \qquad\text{for $p\in\cF_N\Si$}
\]
where the Hamiltonian vector field $X_H$ on $\cF_N\Si$ is defined by
\[
  \om\big(\,\cdot\, ,X_H(p)\big) = dH(p) \qquad\text{for $p\in\cF_N\Si$.}
\]

Point vortex dynamics on planar domains or surfaces is a classical topic in fluid dynamics going back to Helmholtz \cite{Helmholtz:1858} and Kirchhoff \cite{Kirchhoff:1876}. The books \cite{Majda-Bertozzi:2002, Newton:2001, Saffman:1992} are standard modern references. Point vortex dynamics on surfaces has received a lot of attention but mostly for $\Si$ having genus zero, in particular for the round sphere, or surfaces of revolution, or being highly symmetric; see \cite{Boatto-Koiller:2014, dAprile-Esposito:2017, Dritschel-Boatto:2015, Kimura:1999, Newton:2001, Sakajo-Shimizu:2016} and the references therein. Concerning the existence of equilibria, if $\Ga_i\Ga_j>0$ for all $i,j=1,\dots,N$ then $H(p)\to\infty$ as $p\to\pa\cF_N\Si\subset\Si^N$, in particular $H$ is bounded from below. In that case $H$ has at least $\cat\big(\cF_N\Si\big)$ different critical values or infinitely many critical points by standard Lusternik-Schnirelmann theory. The problem is more difficult in the presence of counter-rotating vortices, as the results in \cite{Ba-Pistoia:2015, Ba-Pistoia-Weth:2010} for the problem on bounded domains $\Om\subset\R^2$ show. In \cite{dAprile-Esposito:2017} the authors consider $H_{KR}$ on  a closed surface and assume that there are $m$ point vortices $p_1,\dots,p_{m}$ with vortex strengths $\Ga_1,\dots,\Ga_{m}>0$, and $N-m$ point vortices $p_{m+1},\dots,p_{N}\in\Si$ with $\Ga_{m+1},\dots,\Ga_N<0$. However the vortices with negative $\Ga_i$ are located at fixed positions, so that only the $p_1,\dots,p_{m}$ move freely in $\Si\setminus\{p_{m+1},\dots,p_{N}\}$. Thus the Hamiltonian depends on  $p_1,\dots,p_{m}$, and is independent of $p_i$ for $i=m+1,\dots,N$.

Vortex type Hamiltonians appear also  in connection with blowing up phenomena arising in various partial differential equations from mathematical physics (the sinh-Poisson equation, regular and singular mean field equations, Chern-Simons equations, Toda and Liouville systems, etc.). Namely concentration points of blowing up solutions are located at equilibria of some Hamiltonian function of the above type; see 
\cite{Bartolucci-Pistoia:2007, Ba-Pistoia:2015, Ba-Pistoia-Weth:2010, dAprile:2013, dAprile:2015} for problems on domains $\Om\subset\R^2$ instead of a surface $\Si$, or \cite{chen-Lin:2003, delpino-Kowalczyk-Musso:2005, Esposito-Figueroa:2014, DelPino-et-al:2015, dAprile-Esposito:2017, Figueroa:2014} for surfaces. Furthermore they appear in the framework of critical point theory at infinity as  functional at infinity whose critical points govern the topological contribution of the  critical points at infinity to the difference of topology between the level sets of the associated Euler-Lagrange functional; see \cite{Ahmedou-BenAyed:2017, Ahmedou-BenAyed-Lucia:2017, Ahmedou-BenAyed:2021}. In these settings it makes sense to include non-orientable surfaces.

Now we state our results on the existence of point vortex equilibria.

\begin{Thm}\label{thm:main}
  Suppose $\Si$ is a closed surface not homeomorphic to the sphere $S^2$ or the real projective plane $\R P^2$. Suppose moreover that $N\ge3$ and that
  \begin{equation}\label{eq:Gamma-cond}
    \sum_{\mytop{i,j\in I}{i\ne j}}\Ga_i\Ga_j \ne0 \qquad\text{for all $I\subset\{1,\dots,N\}$ with $|I|\ge2$.}
  \end{equation}
  Then $H$ has a critical point. 
\end{Thm}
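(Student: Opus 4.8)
The plan is to produce a critical point of $H$ by a deformation (min-max) argument on the open manifold $\cF_N\Si$, the decisive point being that condition \eqref{eq:Gamma-cond} forces a Palais--Smale condition at every level. First I would make the collapse behaviour of $H$ quantitative. For $I\subset\{1,\dots,N\}$ with $|I|\ge2$ write
\[
  \si_I := \sum_{\mytop{i,j\in I}{i\ne j}}\Ga_i\Ga_j,
\]
so that \eqref{eq:Gamma-cond} reads $\si_I\ne0$ for all such $I$. If the vortices indexed by $I$ concentrate, at a scale $\eps\to0$, around a common point of $\Si$ while all remaining mutual distances stay bounded away from $0$, then inserting $G(p,q)=-\frac1{2\pi}\log d_g(p,q)-h(p,q)$ into \eqref{eq:ham} and using the regularity (hence boundedness) of $h$ and $\Psi$ yields
\[
  H = -\frac{\si_I}{2\pi}\,\log\eps + O(1).
\]
More importantly, differentiating in the radial (scale) variable of the cluster shows that, in the Riemannian metric induced on $\cF_N\Si$ from the product metric on $\Si^N$, one has $\|\nabla H\|\gtrsim|\si_I|/\eps\to\infty$ as any cluster collapses. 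Thus \eqref{eq:Gamma-cond} says precisely that $\nabla H$ blows up along every approach to the collision set $\pa\cF_N\Si$.

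Second, I would turn this gradient blow-up into compactness. Since $\Si$ is compact, the only way to leave a compact subset of $\cF_N\Si$ is to approach $\pa\cF_N\Si$; but there $\|\nabla H\|\to\infty$. Hence any sequence $(p^{(k)})$ with $\nabla H(p^{(k)})\to0$ remains in a fixed compact subset of $\cF_N\Si$ and therefore subconverges to a critical point, i.e. $H$ satisfies the Palais--Smale condition at \emph{every} level $c\in\R$. Passing to a complete metric conformal to the product one (so that $\pa\cF_N\Si$ lies at infinite distance while preserving the blow-up of $\|\nabla H\|$) makes the negative gradient flow globally defined, and the usual deformation lemma becomes available: whenever $[a,b]$ contains no critical value, $\{H\le b\}$ deformation retracts onto $\{H\le a\}$.

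Third comes the topology, where the hypotheses $\Si\not\cong S^2,\R P^2$ and $N\ge3$ enter. Being neither excluded surface, $\Si$ is aspherical with infinite fundamental group, so the ordered configuration space $\cF_N\Si$ carries correspondingly rich cohomology and a large cup-length. Now $H$ is unbounded both above and below, the two ends being governed by the barrier strata ($\si_I>0$, where $H\to+\infty$) and the exit strata ($\si_I<0$, where $H\to-\infty$); since $\Psi$ is bounded, $H$ can be very negative only near a collision, so for $a\to-\infty$ the sublevel set $\{H\le a\}$ deformation retracts onto a neighbourhood of the exit strata. Were $H$ free of critical points, the flow above would identify $\cF_N\Si$ up to homotopy with this $\{H\le a\}$. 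I would therefore exhibit a cohomology class of $\cF_N\Si$ --- built from the essential loops of the aspherical surface $\Si$, and non-trivial in a suitable product because $N\ge3$ --- that is not supported near the collision strata, so that $\{H\le a\}\not\simeq\cF_N\Si$. This contradiction, equivalently the associated min-max value, produces the desired critical point.

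The main obstacle is exactly this last topological comparison. Verifying that $\cF_N\Si$ is not homotopy equivalent to a neighbourhood of the exit strata requires understanding those strata, whose decomposition into collapsing clusters depends delicately on the sign pattern of $(\Ga_1,\dots,\Ga_N)$ through the numbers $\si_I$; condition \eqref{eq:Gamma-cond} guarantees only that each stratum is unambiguously a barrier or an exit, and one must still check that no relevant cup-length of $\cF_N\Si$ is lost under the retraction onto the exit set. By contrast, the compactness of the first two steps is robust and uses \eqref{eq:Gamma-cond} solely through the non-vanishing of the $\si_I$.
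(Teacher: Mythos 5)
Your first two steps track the paper's analytic core quite closely: the gradient lower bound near collisions under \eqref{eq:Gamma-cond} is exactly Lemma~\ref{lem:collision} (which the paper proves by flattening a neighbourhood of the collision point conformally, comparing Green functions via Lemma~\ref{lem:conformal-Green}, and quoting \cite[Lemma~4.2]{Kuhl:2016}), and converting that blow-up into ``a flow line along which $H$ stays bounded converges to a critical point'' is Lemma~\ref{lem:flow limit}. Two caveats, though. Your dilation heuristic $\|\nabla H\|\gtrsim|\si_I|/\eps$ is only immediate when the cluster collapses at a single scale; when sub-clusters collapse at different rates one has to select the right sub-cluster, which is precisely why \eqref{eq:Gamma-cond} is required for \emph{all} subsets $I$ and why the paper invokes Kuhl's lemma rather than a one-line radial derivative. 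And your passage to a complete conformal metric needs to be checked against the Palais--Smale property, since rescaling the metric rescales the gradient; the paper sidesteps this by showing directly, through the logarithmic estimate in Lemma~\ref{lem:flow limit}, that a trajectory with bounded $H$ cannot reach $\pa\cF_N\Si$ in finite time.

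The genuine gap is your third step, which is where the hypotheses $\Si\not\cong S^2,\R P^2$ and $N\ge3$ must enter, and which you yourself flag as unverified. The paper never compares $\cF_N\Si$ with sublevel sets $\{H\le a\}$ or with neighbourhoods of the ``exit strata''; such a comparison would require a stratum-by-stratum analysis of which collision patterns are barriers and which are exits, depending on the signs of all the $\si_I$ (including nested clusters), and you offer no cup-length computation that survives this. Instead, Section~\ref{sec:linking} builds an explicit linking that is completely insensitive to the signs of the $\si_I$: a handle of $\Si$ (present in the torus and in every higher-genus surface, with the Klein bottle treated via its torus double cover) carries $N$ disjoint parallel circles, giving $\ga_\Si:T^N\to\cF_N\Si$, and $N$ disjoint transverse circles, giving the compact set $\cL_\Si$ on which $H$ is bounded for free, i.e.\ \eqref{eq:bound}; collapsing everything outside the handle yields a map $\rho^N:\cF_N\Si\to T^N$ with $\rho^N\circ\ga_\Si=\id$, and a degree argument (a covering-lifting argument for the Klein bottle) gives the intersection property \eqref{eq:linking}. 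The min-max value $c^*=\sup_{\ga\simeq\ga_\Si}\min_{T^N}H\circ\ga\le\max_{\cL_\Si}H$ then produces, via the increasing gradient flow and Lemma~\ref{lem:flow limit}, a flow line with bounded $H$ and hence a critical point. Replacing your unproven cohomological comparison with this concrete construction is the missing idea; as written, your argument is a plausible program rather than a proof.
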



\begin{Rem}\label{rem:main}
a) The case $N=2$ is trivial because $G(p_1,p_2) \to \infty$ as $d_g(p_1,p_2) \to 0$. In that case $H$ has a minimum or a maximum, depending on the sign of $\Ga_1\Ga_2$.

b) A critical value can be described as
\[
  c^* := \sup_{\ga\simeq\ga_\Si} \min_{T^N} H\circ \ga
\]
where $T^N$ is the $N$-dimensional torus and $\ga_\Si: T^N\to\cF_N\Si$ will be explicitly given in the proof. The set $Crit(H,c^*):=\{p\in\cF_N\Si:\nabla H(p)=0,\ H(p)=c^*\}$ of critical points at the level $c^*$ is compact and stable, i.~e.\ for $\eps>0$ there exists $\de>0$ such that any $\wt H:\cF_N\Si\to\R$ with $\|\wt H-H\|_{\cC^1} < \de$ has a critical point in the $\eps$-neighborhood of $Crit(H,c^*)\subset\cF_N\Si\subset \Si^N$. In particular, $c^*$ is a stable critical value.

c) Observe that the set
\[
  V := \bigcup_{\mytop{I\subset\{1,\dots,N\}}{|I|\ge2}}\Big\{(\Ga_1,\dots,\Ga_N)\in\R^N:\sum_{\mytop{i,j\in I}{i\ne j}}\Ga_i\Ga_j =0\Big\}
\] 
is an algebraic variety of codimension $1$. Theorem~\ref{thm:main} applies for $(\Ga_1,\dots,\Ga_N)$ in the complement $\R^N \setminus V$ which is an open and dense subset of $\R^N$.


d) If $(\Si,g) = (S^2,g_{stand})$ is the round sphere and $H=H_{KR}$ is the classical Kirchhoff-Routh Hamiltonian then equilibrium configurations for three vortices have been completely classified; see \cite[Theorem~4.2.2]{Newton:2001}. A configuration $(p_1,p_2,p_3)\in\cF_3S^2$ is an equilibrium iff
\[
  \Ga_1(\Ga_2+\Ga_3)p_1 + \Ga_2(\Ga_1+\Ga_3)p_2 + \Ga_3(\Ga_1+\Ga_2)p_3 = 0.
\]
The three points lie in a plane, actually on a great circle. If $\Ga_1=\Ga_3<0<\Ga_2$ a simple calculation shows that an equilibrium exists iff $\Ga_1+2\Ga_2 < 0$. Assumption \eqref{eq:Gamma-cond} is equivalent to $\Ga_1+2\Ga_2 \ne 0$. It follows that Theorem~\ref{thm:main} does not hold for $\Si$ being homeomorphic to $S^2$. We expect that the same holds for the real projective plane
$\Si=\R P^2$.

e) If the vortices with negative $\Ga_i$ are at fixed positions as in \cite{dAprile-Esposito:2017} then there exists an equilibrium configuration for the vortices with positive $\Ga_i$ also if $\Si$ is homeomorphic to the sphere or the real projective plane. For these surfaces conditions on the $\Ga_i$ are required in \cite{dAprile-Esposito:2017} that are not needed for surfaces of higher genus. For surfaces of higher genus condition (1.3) from \cite{dAprile-Esposito:2017} corresponds to a simpler version of condition \eqref{eq:Gamma-cond} above, reflecting the fact that in \cite{dAprile-Esposito:2017} vortices with negative vorticities cannot interact with each other, and each vortex with positive vorticity can only interact with at most one vortex with negative vorticity.

f) It is possible to consider a surface $\Si$ with boundary. In that case one can combine our linking arguments with the techniques from \cite{Ba-Pistoia:2015, Ba-Pistoia-Weth:2010, Kuhl:2016} to control the gradient vector field $\nabla H$ near the boundary $\pa\cF_N\Si \subset \Si^N$. 

g) It would also be interesting to investigate surfaces with symmetries, and to find symmetric and non-symmetric configurations of equilibria. For the case of symmetric bounded planar domains such results have been obtained in \cite{Kuhl:2015}.
\end{Rem}

The case of the round sphere mentioned in Remark~\ref{rem:main} d), is of course very special. Our next result, which holds for all closed surfaces including those homeomorphic to the sphere or the projective plane, shows that a much weaker symmetry of $(\Si,g)$ is sufficient for the existence of an equilibrium with three vortices as in \cite[Theorem~4.2.2]{Newton:2001}.

\begin{Thm}\label{thm:symmetry}
  Let $N=3$, $\sign \Ga_i=(-1)^i$ and $\Ga_1\Ga_2+\Ga_1\Ga_3+\Ga_2\Ga_3 > 0$. Suppose there exists an isometric involution $\tau:\Si\to\Si$ and that $\Psi:\Si^3\to\R$ is invariant under the induced action of $\tau$ on $\Si^3$:  $\Psi\big(\tau(p_1),\tau(p_2),\tau(p_3)\big)=\Psi(p_1,p_2,p_3)$.
\begin{itemize}
\item[a)] If the fixed point set $\Si^\tau = \{p\in\Si: \tau(p)=p\}$ has a connected component $S$ diffeomorphic to $S^1$ then $H$ has a critical point $p^* \in \cF_3\Si$ with $p^*_1,p^*_2,p^*_3 \in S$.
\item[b)] If $\Ga_1 = \Ga_3$ and $\emptyset \ne \Si^\tau \ne \Si$ then $H$ has a critical point $p^* \in \cF_3\Si$ with $p^*_2 \in \Si^\tau$ and $p^*_3=\tau(p^*_1) \notin \Si^\tau$.
\end{itemize}
\end{Thm}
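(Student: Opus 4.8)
The plan is to reduce both parts to finding critical points of $H$ on a lower–dimensional fixed–point set via Palais' principle of symmetric criticality, and then to use the single scalar hypothesis $\Ga_1\Ga_2+\Ga_1\Ga_3+\Ga_2\Ga_3>0$ to control $H$ at collisions. First I would record that, since $\tau$ is an isometry, it preserves $\De_g$, $\vol_g$ and $d_g$; by the uniqueness in \eqref{eq:Green} this gives $G(\tau p,\tau q)=G(p,q)$ and hence $h(\tau p,\tau q)=h(p,q)$. Consequently $H$ is invariant under the diagonal involution $\rho(p_1,p_2,p_3)=(\tau p_1,\tau p_2,\tau p_3)$, and --- when $\Ga_1=\Ga_3$ and $\Psi$ is correspondingly symmetric (automatic for $\Psi_{KR}$) --- also under the reflection $\wt\tau(p_1,p_2,p_3)=(\tau p_3,\tau p_2,\tau p_1)$; here $\Ga_1=\Ga_3$ is exactly what matches the coefficients of the Green part after exchanging $p_1$ and $p_3$. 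By symmetric criticality, any critical point of $H$ restricted to the fixed–point set of the relevant involution is a critical point of $H$ on $\cF_3\Si$. Note that only the symmetry is used, not the topology of $\Si$, which is why the theorem covers $S^2$ and $\R P^2$ as well.

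For part (b) I would minimise. The fixed–point set of $\wt\tau$ inside $\cF_3\Si$ is $\{(p_1,p_0,\tau p_1): p_0\in\Si^\tau,\ p_1\in\Si\setminus\Si^\tau\}$ (with the distinctness conditions), and on it $\tau$–invariance together with $\Ga_1=\Ga_3$ collapses $H$ to
\[
  H = 4\Ga_1\Ga_2\,G(p_1,p_0)+2\Ga_1^2\,G(p_1,\tau p_1)+\Psi,
\]
a function of $(p_0,p_1)\in\Si^\tau\times(\Si\setminus\Si^\tau)$. I claim it is proper and bounded below, tending to $+\infty$ at every collision. When $p_1\to\Si^\tau$ away from $p_0$ the positive term $2\Ga_1^2 G(p_1,\tau p_1)\to+\infty$. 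The delicate collision is $p_1\to p_0$, where $G(p_1,p_0)$ and $G(p_1,\tau p_1)$ blow up with opposite–sign coefficients; writing $v=\exp_{p_0}^{-1}p_1$, one has $d_g(p_1,p_0)\sim|v|$ and $d_g(p_1,\tau p_1)\sim 2|v_-|$ with $v_-$ the normal component, so $d_g(p_1,\tau p_1)\le 2\,d_g(p_1,p_0)$ and hence
\[
  H \ge \frac{1}{\pi}\big(\Ga_1\Ga_2+\Ga_1\Ga_3+\Ga_2\Ga_3\big)\,\log\frac{1}{d_g(p_1,p_0)} + O(1),
\]
which forces $H\to+\infty$. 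Since $\Si^\tau\times\Si$ is compact and $H\to+\infty$ at the collision locus, $H$ attains its minimum at an interior point, the desired equilibrium with $p_2^*=p_0\in\Si^\tau$ and $p_3^*=\tau p_1^*\notin\Si^\tau$.

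For part (a) minimisation fails and I would run a mountain pass. In the component $C\cong S^1\times\De^\circ$ of $\cF_3 S$ of one cyclic order (with $\De$ the simplex of the three gaps $s,t,u$ summing to the length of $S$), the boundary behaviour is mixed: $H\to-\infty$ on the two edges where the counter–rotating pairs $(1,2)$ and $(2,3)$ collide ($\Ga_1\Ga_2,\Ga_2\Ga_3<0$) and $H\to+\infty$ on the edge where $(1,3)$ collide ($\Ga_1\Ga_3>0$). I set $c=\inf_\gamma\max_\gamma H$ over paths joining the two ``$-\infty$ valleys''. Two things must be checked. First, for every large $M$ the two valleys lie in different components of $\{H<-M\}$: away from the common corner the two edges are disjoint compacta, while near the corner (triple collision) the low sublevel set splits into two disjoint horns, because collapsing all three gaps at comparable rates sends $H\to+\infty$, again by $\sum_{i<j}\Ga_i\Ga_j>0$; choosing the base points deep enough makes $c$ exceed their values. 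Second, the Palais–Smale condition at level $c$: such a sequence cannot approach an edge, where $H\to\pm\infty\ne c$, nor a vertex, by the scaling identity $(s\,\pa_s+t\,\pa_t)H=-\tfrac1\pi\sum_{i<j}\Ga_i\Ga_j+o(1)$ (an Euler/Pohozaev computation on the singular logarithms), whose right–hand side is a nonzero constant and hence yields $|\nabla H|\gtrsim 1/\dist\to\infty$ near triple collisions. The mountain pass theorem then provides a critical point $p^*\in C$ with $p_1^*,p_2^*,p_3^*\in S$.

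The main obstacle in both parts is the same collision analysis: two logarithmic singularities with opposite–sign coefficients compete, and one must turn the scalar hypothesis $\sum_{i<j}\Ga_i\Ga_j>0$ into quantitative control. The scaling (Euler) identity is the device that does this, converting the hypothesis into a definite–sign radial derivative and thereby delivering, uniformly, the energy barrier in (a), the properness in (b), and the absence of critical points at triple collisions (Palais–Smale). The step I expect to be least routine is verifying that the smooth ingredients --- the regular part $h(p_i,p_j)$, the function $\Psi$, and the Riemannian distortion of $d_g$ on the shrinking arcs and geodesic balls --- enter only at lower order and uniformly, so that the leading logarithmic computation governs the outcome; and, for part (b), pinning down the mild symmetry of $\Psi$ under $\wt\tau$ that is needed beyond the assumed diagonal $\tau$–invariance.
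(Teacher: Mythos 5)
Your proposal is correct, and part (b) is essentially the paper's own proof: the same composed involution $\si(p_1,p_2,p_3)=\big(\tau(p_3),\tau(p_2),\tau(p_1)\big)$, the same reduction to its fixed-point set via symmetric criticality, the same estimate $d_g(p_1,\tau(p_1))\le 2d_g(p_1,p_0)$ converting $\Ga_1(\Ga_1+2\Ga_2)>0$ into coercivity, and then minimization. (Your closing worry about $\Psi$ is well placed: invariance of $H$ under $\si$ needs $\Psi(p_3,p_2,p_1)=\Psi(p_1,p_2,p_3)$ beyond the stated diagonal $\tau$-invariance; the paper uses this implicitly, and it holds for $\Psi_{KR}$. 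Also, the triangle inequality plus the isometry property gives your distance bound directly, no normal coordinates needed.) For part (a) you take a genuinely different, essentially dual route. The paper runs a max-min for $H$ (a mountain pass for $-H$): it joins two \emph{high} configurations --- $p^\eps$ near the symmetric triple collision, where $\sum_{i<j}\Ga_i\Ga_j>0$ forces $H\to\infty$, and $p^{L/2-\eps}$ near the $(1,3)$-collision, where $\Ga_1\Ga_3>0$ forces $H\to\infty$ --- and shows, by lifting paths to the covering $\R\to S$ and applying the intermediate value theorem to $w_3-w_1$, that every admissible path meets the set $\{d_g(p_1,p_3)=\frac{L}{2}\}$, on which $H\le\al<\infty$; the critical point then comes from the increasing-gradient-flow machinery of Lemma~\ref{lem:collision} and Lemma~\ref{lem:flow limit} already built for Theorem~\ref{thm:main}. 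You instead run the standard inf-max mountain pass for $H$ between the two \emph{low} valleys (the counter-rotating collisions $(1,2)$ and $(2,3)$), which requires two verifications the paper's route avoids: the splitting of $\{H<-M\}$ into two disjoint horns near the shared triple-collision corner, and a Palais--Smale condition; you correctly reduce both to the scaling (Euler) identity, which is exactly the quantitative content of the paper's Lemma~\ref{lem:collision} (quoted there from Kuhl's Lemma~4.2) and uses the same sign hypothesis. So the two arguments consume the same sign information at mirrored places: the paper's lifting/crossing step is topologically cleaner and recycles the Theorem~\ref{thm:main} machinery wholesale, while your version is more standard mountain-pass technology and makes explicit how $\sum_{i<j}\Ga_i\Ga_j>0$ acts as a gradient lower bound $|\nabla H|\gtrsim 1/\dist$ at triple collisions.
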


\begin{Rem}\label{rem:symmetry}
a) Theorem~\ref{thm:symmetry}~a) applies, for instance, to $(S^2,g)$ if the metric $g$ on $S^2$ is invariant under $\tau:S^2\to S^2$, $(x_1,x_2,x_3)\mapsto(x_1,x_2,-x_3)$. If $g$ is in addition invariant under the antipodal map $x \mapsto -x$ then it induces a metric $\bar g$ on $\R P^2$. Moreover $\tau$ induces an isometric involution $\bar\tau$ on $\R P^2$ with fixed point set $(\R P^2)^{\bar\tau} = \{[x]\in\R P^2: x_3=0\} \cup \{[e_3]\}$ so that Theorem~\ref{thm:symmetry}~a) can be applied. If in addition $\Ga_1=\Ga_3$ then Theorem~\ref{thm:symmetry}~b) applies in addition.

b) In the case of the round sphere the solutions obtained in Theorem~\ref{thm:symmetry} lie on one $O(3)$-orbit and correspond to those from \cite[Theorem~4.2.2]{Newton:2001}.

c) In the situation of Theorem~\ref{thm:symmetry}~a) one can obtain many more stationary point vortex configurations with all vortices in $S$ for arbitrary values of $N$. We refer the reader to \cite{Ba-Pistoia-Weth:2010} and in particular to \cite{Kuhl:2015} for results in this direction on a bounded planar domain. Since these are one-dimensional configurations of vortices the techniques from these papers can be applied.
\end{Rem}

In applications to singular limit problems for partial differential equations it is important that the critical points of $H$ are nondegenerate. In some applications to mean field equations $\Psi$ is of the form
\begin{equation}\label{eq:psi-1}
  \Psi(p_1,\dots,p_N) =  \sum_{i=1}^N \log K(p_i) - \sum_{i=1}^N \Ga_i^2h(p_i,p_i)
\end{equation}
or
\begin{equation}\label{eq:psi-2}
  \Psi(p_1,\dots,p_N) = \sum_{i=1}^{N_1} \log K_1(p_i) +  \sum_{i=N_1+1}^{N} \log K_2(p_i) - \sum_{i=1}^N \Ga_i^2h(p_i,p_i)
\end{equation}
where $K,K_1,K_2:\Si\to\R^+$ are of class $\cC^2$.

\begin{Prop}\label{prop:generic-K}
a) The  set
\[
  \{K\in\cC^2(\Si,\R^+):\text{$H$ as in \eqref{eq:ham} with $\Psi$ as in \eqref{eq:psi-1} is a Morse function}\}
\]
is an open and dense subset of $\cC^2(\Si,\R^+)$.

b) The  set
\[
  \{K\in\cC^2\big(\Si,(\R^+)^2\big):\text{$H$ as in \eqref{eq:ham} with $\Psi$ as in \eqref{eq:psi-2} is a Morse function}\}
\]
is an open and dense subset of $\cC^2\big(\Si,(\R^+)^2\big)$.
\end{Prop}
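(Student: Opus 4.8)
The plan is a standard parametric transversality argument with $K$ as the varying parameter; the density of the Morse parameters is the Sard--Smale step, while openness is a confinement argument near $\pa\cF_N\Si$. I treat part a) and indicate at the end the trivial modification for b). Since $\log:\R^+\to\R$ is a diffeomorphism, the substitution $\psi:=\log K$ identifies the parameter space with the separable Banach space $\cB:=\cC^2(\Si,\R)$, and
\[
  H_\psi(p)=\sum_{\mytop{i,j=1}{i\ne j}}^N\Ga_i\Ga_jG(p_i,p_j)-\sum_{i=1}^N\Ga_i^2h(p_i,p_i)+\sum_{i=1}^N\psi(p_i)
\]
depends affinely on $\psi$. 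On $M:=\cF_N\Si$ the arguments are pairwise distinct, so $G(p_i,p_j)$ and $h(p_i,p_i)$ are smooth, $H_\psi\in\cC^2(M)$, and $dH_\psi$ is a $\cC^1$ section of $T^*M$. Recalling that $H_\psi$ is Morse iff $dH_\psi$ is transverse to the zero section $Z\subset T^*M$, I study the universal differential
\[
  \Phi:M\times\cB\to T^*M,\qquad\Phi(p,\psi)=dH_\psi(p),
\]
a $\cC^1$ map, and aim to show $\Phi\pitchfork Z$.

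For density, fix $(p,\psi)$ with $\Phi(p,\psi)\in Z$, i.e.\ a critical point $p$ of $H_\psi$. Differentiating in the parameter gives
\[
  D_\psi\Phi(p,\psi)[\eta]=\big(d\eta(p_1),\dots,d\eta(p_N)\big)\in T^*_{p_1}\Si\times\cdots\times T^*_{p_N}\Si=T^*_pM .
\]
Because $p_1,\dots,p_N$ are pairwise distinct, bump functions with disjoint supports realize any tuple of covectors as $\big(d\eta(p_i)\big)_i$; hence $D_\psi\Phi$ alone surjects onto the fibre $T^*_pM$, which is the normal space to $Z$, so $\Phi\pitchfork Z$. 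Consequently $\cS:=\Phi^{-1}(Z)$ is a $\cC^1$ Banach submanifold and the projection $\pi:\cS\to\cB$ is $\cC^1$ Fredholm of index $\dim M-\codim Z=2N-2N=0$. By the Sard--Smale theorem its regular values are residual in $\cB$, and $\psi$ is regular exactly when $dH_\psi\pitchfork Z$, i.e.\ when $H_\psi$ is Morse. Thus the Morse parameters are residual, in particular dense.

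For openness, the decisive point is that the critical set stays in a fixed compact subset of $M$ for all $\psi$ near a given $\psi_0$. As $p\to\pa\cF_N\Si$ a cluster $I$ of vortices collapses at some scale $\eps\to0$, and the singular part contributes $-\tfrac1{2\pi}\big(\sum_{\mytop{i,j\in I}{i\ne j}}\Ga_i\Ga_j\big)\log\eps+O(1)$, whose dilation gradient is of order $1/\eps$; under \eqref{eq:Gamma-cond} the coefficient is nonzero, so $|\nabla H_\psi(p)|\to\infty$ at $\pa\cF_N\Si$, uniformly for $\psi$ in a $\cC^2$-bounded set, since the bounded gradients of $\sum_i\psi(p_i)$ and of the regular terms cannot cancel this blow-up. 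Fixing a compact $C\subset M$ with $|\nabla H_{\psi_0}|\ge1$ off $C$ and using $\big\|\nabla_p\sum_i(\psi-\psi_0)(p_i)\big\|\le\sqrt N\,\|\psi-\psi_0\|_{\cC^1}$ uniformly on $M$, one gets $\nabla H_\psi\ne0$ off $C$ for $\psi$ close to $\psi_0$; hence all critical points lie in the compact $C$, where the Morse property is $\cC^2$-open and $\psi\mapsto H_\psi$ is continuous into $\cC^2(C)$. So a whole $\cC^2$-neighbourhood of $\psi_0$ is Morse.

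The main obstacle is precisely this uniform gradient blow-up at \emph{every} boundary stratum: one must exclude a collapsing cluster resting at an internal equilibrium with vanishing leading-order forces, and it is exactly condition \eqref{eq:Gamma-cond} ($\sum_{\mytop{i,j\in I}{i\ne j}}\Ga_i\Ga_j\ne0$ for all $I$ with $|I|\ge2$) that rules this out and yields the confinement. For part b) one takes $\cB=\cC^2(\Si,\R)^2$ and perturbs the index blocks $\{1,\dots,N_1\}$ and $\{N_1+1,\dots,N\}$ by the two components $\eta_1,\eta_2$ separately; since the points within each block remain distinct, $D_\psi\Phi$ again surjects onto $T^*_pM$ and the entire argument carries over unchanged.
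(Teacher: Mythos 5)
Your proposal is correct, and its overall architecture --- generic transversality for density, confinement of critical points away from $\pa\cF_N\Si$ for openness --- is the same as the paper's; the difference lies in how the transversality step is organized. You run it globally: the universal map $\Phi:\cF_N\Si\times\cB\to T^*(\cF_N\Si)$ is transverse to the zero section because its vertical derivative $\eta\mapsto\big(d\eta(p_1),\dots,d\eta(p_N)\big)$ is already surjective (bump functions at the distinct points $p_i$), and Sard--Smale applied to the index-zero projection $\Phi^{-1}(Z)\to\cB$ yields at once that the Morse parameters form a residual, hence dense, set. The paper instead applies the chart-based transversality theorem \cite[Theorem~5.4]{Henry:2005} to maps $F:\cF_NU\times Y\to\R^{2N}$, which only gives genericity of the Morse property for configurations lying in a fixed coordinate chart (its \eqref{eq:Morse}); to globalize, it must use compactness of the critical set $K_{f_0}$ (Lemma~\ref{lem:collision}, hence hypothesis \eqref{eq:Gamma-cond}), cover $K_{f_0}$ by finitely many such chart neighborhoods, and exploit the gradient lower bound off that cover. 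So in the paper even the density half invokes \eqref{eq:Gamma-cond}, while in your argument \eqref{eq:Gamma-cond} is needed only for openness --- a small but genuine gain in localizing where the hypothesis enters. Your openness argument (the perturbation bound $\sqrt N\,\|\psi-\psi_0\|_{\cC^1}$ plus $\cC^2$-stability of nondegeneracy on a compact set) is exactly what the paper's one-sentence openness claim unwinds to, and your reduction for part b), perturbing the two index blocks by $\eta_1,\eta_2$ separately, matches the paper's remark that b) proceeds analogously.

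One caveat: your justification of the uniform gradient blow-up at $\pa\cF_N\Si$ --- a single cluster collapsing at one scale $\eps$, with dilation derivative of order $1/\eps$ --- is a heuristic, not a proof. Collisions can be nested, with sub-clusters collapsing at different rates, and excluding cancellation among the scales is precisely why \eqref{eq:Gamma-cond} is demanded for \emph{every} subset $I$ with $|I|\ge2$. The rigorous statement is the paper's Lemma~\ref{lem:collision}, proved by flattening the metric conformally near the collision point and quoting \cite[Lemma~4.2]{Kuhl:2016}; you correctly identify this as the main obstacle and invoke the right hypothesis, so within this paper your openness step becomes complete simply by citing Lemma~\ref{lem:collision} in place of the one-scale sketch. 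Note finally that \eqref{eq:Gamma-cond} does not appear in the statement of Proposition~\ref{prop:generic-K}, yet both your proof and the paper's rely on it; that is an imprecision of the statement rather than of your argument.
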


\begin{Rem}\label{rem:Morse}
  The proof of Proposition~\ref{prop:generic-K} can be adapted to prove that the set of $\Psi \in\cC^2(\Si^N)$ such that $H$ as in \eqref{eq:ham} is a Morse function is an open and dense subset of $\cC^2(\Si^N)$; see Remark~\ref{rem:proof Morse}.
\end{Rem}

The existence of periodic point vortex configurations on closed surfaces is relatively unexplored, except for the round sphere. This is work in progress. The results about periodic solutions of the point vortex problem near an equilibrium of the Robin function on a bounded planar domain from \cite{Ba:2016, Ba-Dai:2016, Ba-Gebhard:2017, Ba-Sacchet:2018, Gebhard:2018} can be transferred to surfaces. It would be very interesting to extend the results from \cite{Ba-Gebhard:2017, Gebhard:2018} about global connected continua of periodic solutions to the case of surfaces.

We conclude the introduction with discussing an application to the following sinh-Poisson mean field equation considered in the recent paper \cite{Figueroa:2022} by Figueroa:
\begin{equation}\label{eq:sinh-poisson}
  -\De_gu = \la_1\left(\frac{V_1e^u}{\int_\Si V_1e ^udv_g} - \frac1{\vol_g(\Si)}\right) - \la_2\tau\left(\frac{V_2e^{-\tau u}}{\int_\Si V_2e^{-\tau u}dv_g} - \frac1{\vol_g(\Si)}\right)
\end{equation}
Here $\la_1,\la_2,\tau>0$ and $V_1,V_2:\Si \to \R^+$ are of class $\cC^2$. The critical parameter values are $\la_1,\la_2\tau^2 \in 8\pi\N$. Theorem~1.1 of \cite{Figueroa:2022} yields the existence of a family of solutions $(\la_1(\de),\la_2(\de),u_\de) \in \R^+\times\R^+\times H^1(\Si)$, $\de\in(0,\de_0)$, of \eqref{eq:sinh-poisson}, provided the Hamiltonian function (see \cite[equation (1.7)]{Figueroa:2022})
\[
\begin{aligned}
  H(p_1,\ldots, p_N) &= \sum_{\mytop{i,j=1}{i\ne j}}^N \Ga_i\Ga_jG(p_i,p_j) - \sum_{i=1}^N \Ga_i^2h(p_i,p_i)\\
         &\hspace{1cm}- \frac1{4\pi}\sum_{j=1}^m\log V_1(p_j) - \frac1{4\pi\tau^2}\sum_{j=m+1}^N\log V_2(p_j)
\end{aligned}
\]
where $0\le m\le N$, $\Ga_1=\dots=\Ga_m=1$, $\Ga_{m+1}=\dots=\Ga_N=-\frac1\tau$, has a stable critical set $\cD\subset\cF_N\Si$, and such that certain sign-conditions hold for functions $A_1^*$, $A_2^*$ defined in \cite[equation (1.10)]{Figueroa:2022}. These solutions have $m$ positive and $N-m$ negative concentration points as $\de\to0$ and satisfy
\[
  \int_\Si u_\de\, dv_g=0, \quad \la_1(\de)\to 8\pi m, \quad \la_2(\de)\tau^2 \to 8\pi (N-m)
\]
and
\[
  \frac{\la_1(\de)V_1e^{u_\de}}{\int_\Si V_1e^{u_\de}dv_g} \weakto 8\pi\sum_{i=1}^{m}\de_{p^*_i}
  \qquad\text{and}\qquad
  \frac{\la_2(\de)\tau^2 V_2e^{-\tau u_\de}}{\int_\Si V_2e^{-\tau u_\de}dv_g} \weakto 8\pi\sum_{i=m+1}^{N}\de_{p^*_i}
\]
weakly in the sense of measures as $\de \to 0$, for some $p^*\in\cD$. The Hamiltonian has the form \eqref{eq:ham} with $\Psi$ as in \eqref{eq:psi-2}. Proposition~\ref{prop:generic-K} yields that $H$ is a Morse function for a generic choice of $V_1,V_2$, i.e.\ for $V_1,V_2$ in an open and dense subset of $\cC^2(\Si,\R^+)$. For such a choice of $V_1,V_2$, and if \eqref{eq:Gamma-cond} holds, Theorems~\ref{thm:main} and \ref{thm:symmetry} yield a critical point $p^*$ of $H$ which is non-degenerate, hence stable. For a generic choice of $V_1,V_2$ one has $A_1^*(p^*) \ne 0 \ne A_2^*(p^*)$. Theorem~1.1 of \cite{Figueroa:2022} applies if these have the same sign. In that case it is also determined whether $\la_1(\de) > 8\pi N_1$ or $\la_1(\de)< 8\pi N_1$, and similarly for $\la_2(\de)$. The condition $\sign\big(A_1^*(p^*)\big) = \sign\big(A_2^*(p^*)\big)$ is an open condition but not a generic one, and needs to be considered in addition to the existence of $p^*$. Concerning condition \eqref{eq:Gamma-cond} the equation $ \sum_{\mytop{i,j\in I}{i\ne j}}\Ga_i\Ga_j = 0$ is a quadratic equation in $\frac1\tau$, hence has at most two solutions. Therefore \eqref{eq:Gamma-cond} is generically true.

\section{The linking for Theorem~\ref{thm:main}}\label{sec:linking}
Setting $T:=[0,1]/\{0,1\}$ in this section we construct a map $\ga_\Si:T^N\to\cF_N\Si$ and a subset $\cL_\Si\subset\cF_N\Si$ such that
\begin{equation}\label{eq:bound}
  \text{$H$ is bounded on $\cL_\Si$}
\end{equation}
and
\begin{equation}\label{eq:linking}
  \ga\simeq\ga_\Si\quad\Longrightarrow\quad \ga\big(T^N\big)\cap\cL_\Si \ne \emptyset.
\end{equation}
Thus $\ga_\Si$ and $\cL_\Si$ link in $\cF_N\Si$. We consider three cases depending on whether $\Si$ is a torus, a Klein bottle or has higher genus. Let us begin with the simplest case of $\Si$ being homeomorphic to a torus. We may assume $\Si=T^2$. Choose $0<\tau_1<\dots<\tau_N<1$ and define
\[
  \ga_{T^2}:T^N\to\cF_NT^2,\quad \ga_{T^2}\big([s_1,\dots,s_N]\big) := \big([s_1,\tau_1],\dots,[s_N,\tau_N]\big).
\]
Moreover we choose $0<\si_1<\dots<\si_N<1$ and set
\[
  \cL_{T^2} := \big\{\big([\si_1,t_1],\dots,[\si_N,t_N]\big): t_1,\dots,t_N\in[0,1]\big\} \subset \cF_NT^2.
\]
Obviously $\cL_{T^2}$ is compact so that
\begin{equation}\label{eq:bound-torus}
  \min\{d_g(p_i,p_j): (p_1,\dots,p_N)\in\cL_{T^2},\ i\ne j\} > 0.
\end{equation}
This implies that $H$ is bounded on $\cL_{T^2}$.

\begin{Lem}\label{lem:torus}
If $\ga:T^N\to\cF_NT^2$ is homotopic to $\ga_{T^2}$ then $\ga\big(T^N\big)\cap\cL_{T^2}\ne\emptyset$.
\end{Lem}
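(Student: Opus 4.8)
The plan is to recognize $\cL_{T^2}$ as a single fibre of the ``first–coordinate'' projection and to show that any $\ga$ homotopic to $\ga_{T^2}$ must hit that fibre for degree-theoretic reasons. Concretely, let $\Phi:\cF_NT^2\to T^N$ be the continuous map that records the first $T$–coordinate of each (labelled) point,
\[
  \Phi\big([x_1,y_1],\dots,[x_N,y_N]\big) = [x_1,\dots,x_N].
\]
This is well defined and continuous, being the restriction to $\cF_NT^2\subset(T^2)^N$ of the $N$-fold product of the projection $T^2=T\times T\to T$ onto the first factor. The point of the construction is the identity $\Phi\circ\ga_{T^2}=\id_{T^N}$, immediate from the definition of $\ga_{T^2}$, together with the identification $\cL_{T^2}=\Phi^{-1}(\si)$ where $\si:=[\si_1,\dots,\si_N]\in T^N$. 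In other words $\ga_{T^2}$ is a section of $\Phi$ over the point $\si$, and $\cL_{T^2}$ is exactly the fibre above $\si$; the linking statement then becomes the surjectivity of $\Phi\circ\ga$ at $\si$.

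First I would verify that $\cL_{T^2}$ really is the full fibre $\Phi^{-1}(\si)$. A configuration $\big([\si_1,t_1],\dots,[\si_N,t_N]\big)$ lies in $\cF_NT^2$ for every choice of $t_1,\dots,t_N$ precisely because $\si_1<\dots<\si_N$ are distinct, so the prescribed first coordinates already force the $N$ points to be pairwise distinct; conversely any configuration whose $i$-th first coordinate equals $\si_i$ has this form. Hence $\Phi^{-1}(\si)=\cL_{T^2}$ exactly. Next, from $\Phi\circ\ga_{T^2}=\id_{T^N}$ and the hypothesis $\ga\simeq\ga_{T^2}$ I obtain $\Phi\circ\ga\simeq\id_{T^N}$, so $\Phi\circ\ga$ is a self-map of the closed oriented manifold $T^N$ of degree $1$ by homotopy invariance of the Brouwer degree. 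A map of nonzero degree is surjective, so there exists $u\in T^N$ with $\Phi(\ga(u))=\si$; equivalently $\ga(u)\in\Phi^{-1}(\si)=\cL_{T^2}$, which gives the desired nonempty intersection.

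There is no analytic obstacle here, and the argument is short; the only things to get right are bookkeeping. The main point to be careful about is that the equality $\cL_{T^2}=\Phi^{-1}(\si)$ is exact in both inclusions, which rests on the distinctness of the $\si_i$ and on working in the \emph{ordered} configuration space so that $\Phi$ is genuinely well defined coordinate by coordinate; and that the passage ``degree $1\Rightarrow$ surjective'' is legitimate, for which orientability and compactness of $T^N$ are exactly what is needed. I expect the genus and Klein-bottle cases to be more delicate precisely because the clean product splitting $T^2=T\times T$, which here makes $\ga_{T^2}$ a section of $\Phi$ and pins down a single transverse intersection point, is no longer available.
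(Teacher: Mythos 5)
Your proposal is correct and is essentially identical to the paper's proof: the paper defines the same projection $\mu\big([s_1,t_1],\dots,[s_N,t_N]\big)=[s_1,\dots,s_N]$, notes $\mu\circ\ga_{T^2}=\id_{T^N}$, and uses homotopy invariance of the degree to conclude that $\mu\circ\ga$ has degree $1$, hence is onto, so some point maps to $[\si_1,\dots,\si_N]$ and therefore lands in $\cL_{T^2}$. Your extra verification that $\cL_{T^2}$ is exactly the fibre $\Phi^{-1}(\si)$ is a harmless (and careful) elaboration of the inclusion the paper uses implicitly.
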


\begin{proof}
We consider the map
\[
  \mu:\cF_NT^2\to T^N,\quad \mu\big([s_1,t_1],\dots,[s_N,t_N]\big)  =  [s_1,\dots,s_N] .
\]
Clearly $\mu\circ\ga_{T^2}=\id:T^N\to T^N$, hence $\deg(\mu\circ\ga_{T^2})=1$. Since $\ga$ is homotopic to $\ga_{T^2}$ we obtain $\deg(\mu\circ\ga)=1$, hence $\mu\circ\ga$ must be onto. It follows that $\mu\circ\ga(p)=[\si_1,\dots,\si_N]$ for some $p\in T^N$ which implies $\ga(p)\in\cL_{T^2}$.
\end{proof}

Next we consider the case of the Klein bottle. Here we may assume
\[
  \Si=K:=[0,1]^2/\sim\quad\text{where $(s,1)\sim(1-s,0)$, $(1,t)\sim(0,t)$.}
\]
Recall that $K$ is double-covered by $T^2$. In our notation a covering map is given by
\[
  \pi:T^2\to K,\quad \pi\big([s,t]\big):=\begin{cases}[s,2t]&\text{for $0\le t\le\frac12$}\\ [1-s,2t-1]&\text{for $\frac12\le t\le1$.}\end{cases}
\]
Setting
\[
  \pi^N(p_1,\dots,p_N) := \big(\pi(p_1),\dots,\pi(p_N)\big)
\]
we obtain a $2^N$-sheeted covering map
\[
  \pi^N: E:=\{p\in\cF_NT^2:\pi^N(p)\in\cF_NK\} \to\cF_NK.
\]
We consider $\cL_{T^2}$ as above but with $0<\si_1<\dots<\si_N<\frac12$ so that $\cL_{T^2}\subset E$, and define $\cL_K:=\pi^N\big(\cL_{T^2}\big)$. Then $\cL_K$ is compact and \eqref{eq:bound-torus} holds with $\cL_K$ instead of $\cL_{T^2}$. Thus $H$ is bounded on $\cL_K$ as in the case of the torus. Next we choose $0<\tau_1<\dots<\tau_N<\frac12$ and set $\ga_K:=\pi^N\circ\ga_{T^2}$, i.e.
\[
  \ga_K:T^N\to \cF_NK,\quad \ga_K\big([s_1,\dots,s_N]\big) = \big(\pi\big([s_1,\tau_1]\big),\dots,\pi\big([s_N,\tau_N]\big)\big).
\]
Observe that $\ga_{T^2}\big(T^N\big)\subset E\subset\cF_NT^2$ because $0<\tau_1<\dots<\tau_N<\frac12$.

\begin{Lem}\label{lem:Klein}
If $\ga:T^N\to\cF_NK$ is homotopic to $\ga_K$ then $\ga\big(T^N\big)\cap\cL_K\ne\emptyset$.
\end{Lem}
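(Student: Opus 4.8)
The plan is to reduce Lemma~\ref{lem:Klein} to the already-established torus case (Lemma~\ref{lem:torus}) by lifting through the covering map $\pi^N:E\to\cF_NK$. The key structural observation is that $\ga_K=\pi^N\circ\ga_{T^2}$ and $\cL_K=\pi^N(\cL_{T^2})$ by construction, so intersections upstairs in $\cF_NT^2$ will descend to intersections downstairs in $\cF_NK$. The obstacle is that homotopy does not lift for free: we are given $\ga\simeq\ga_K$ in $\cF_NK$, and we must produce a map into $\cF_NT^2$ that we can compare with $\ga_{T^2}$ and to which Lemma~\ref{lem:torus} applies.

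First I would argue that the covering space structure lets us lift the map $\ga:T^N\to\cF_NK$. Since $\ga$ is homotopic to $\ga_K=\pi^N\circ\ga_{T^2}$, and $\ga_{T^2}$ is an explicit lift of $\ga_K$, the homotopy lifting property for the covering $\pi^N$ produces a lift $\wt\ga:T^N\to E\subset\cF_NT^2$ with $\pi^N\circ\wt\ga=\ga$ and with $\wt\ga$ homotopic to $\ga_{T^2}$ within $E$. (To invoke homotopy lifting cleanly I would start the homotopy at $\ga_K$, lift the constant/known end $\ga_{T^2}$, and lift the homotopy; the endpoint of the lifted homotopy is the desired $\wt\ga$.) Here I am using that $E\to\cF_NK$ is a genuine (finite-sheeted) covering, as established in the excerpt, so the standard lifting theorems apply.

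Next, with $\wt\ga\simeq\ga_{T^2}$ as maps $T^N\to\cF_NT^2$, Lemma~\ref{lem:torus} directly yields a point $p\in T^N$ with $\wt\ga(p)\in\cL_{T^2}$. Applying $\pi^N$ and using $\pi^N\circ\wt\ga=\ga$ together with $\pi^N(\cL_{T^2})=\cL_K$ gives
\[
  \ga(p)=\pi^N\big(\wt\ga(p)\big)\in\pi^N\big(\cL_{T^2}\big)=\cL_K,
\]
so $\ga(T^N)\cap\cL_K\ne\emptyset$, as required.

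The main obstacle is the lifting step: one must be careful that the homotopy $\ga\simeq\ga_K$ takes values in $\cF_NK$ (not in a larger space) so that it lifts to the specified cover $E$, and that the resulting lift $\wt\ga$ lands inside $E$ rather than merely in $\cF_NT^2$ — but since $\ga_{T^2}(T^N)\subset E$ (as noted in the excerpt, because $0<\tau_1<\dots<\tau_N<\frac12$) and $E=(\pi^N)^{-1}(\cF_NK)$ is exactly the preimage, any lift of a map into $\cF_NK$ automatically lies in $E$. The degree/homotopy bookkeeping that made Lemma~\ref{lem:torus} work is thus inherited through the covering, and no new topological input is needed beyond the homotopy lifting property.
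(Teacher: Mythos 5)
Your proposal is correct and follows essentially the same route as the paper: both lift the homotopy $\ga\simeq\ga_K$ through the covering $\pi^N:E\to\cF_NK$ starting from the known lift $\ga_{T^2}$, apply Lemma~\ref{lem:torus} to the lifted endpoint, and project the resulting intersection point back down via $\pi^N(\cL_{T^2})=\cL_K$. Your additional remark that any lift automatically lands in $E=(\pi^N)^{-1}(\cF_NK)$ is a correct (and welcome) clarification of a point the paper leaves implicit.
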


\begin{proof}
Let $h:T^N\times[0,1]\to \cF_NK$ be a homotopy between $h_0=h(\,\cdot\,,0)=\ga_K=\pi^N\circ\ga_{T^2}$ and $h_1=h(\,\cdot\,,1)=\ga$. Since $\pi^N:E\to\cF_NK$ is a covering map $h$ can be lifted to a homotopy
\[
  \wt{h}:T^N\times[0,1]\to E\subset\cF_NT^2
\]
such that $\wt{h}_0=\ga_{T^2}$ and $\pi^N\circ\wt{h}=h$, in particular $\pi^N\circ\wt{h}_1=\ga$. Now Lemma~\ref{lem:torus} yields that $\wt{h}_1\big(T^N\big)\cap\cL_{T^2}\ne\emptyset$, hence $h_1\big(T^N\big)\cap\cL_K = \pi^N\circ\wt{h}\big(T^N\big)\cap\pi^N\big(\cL_{T^2}\big) \ne \emptyset$.
\end{proof}

It remains to consider the case of a surface $\Si$ of higher genus. Here we may assume that $\Si=T^2\#\Si'$ is the connected sum of $T^2$ and a surface $\Si'$. We choose $r>0$, a point $(s_0,t_0)\in (0,1)^2 \subset T^2$, as well as $0<\si_1<\dots<\si_N<1$ and $0<\tau_1<\dots<\tau_N<1$ so that
\[
  \overline{B_{2r}(s_0,t_0)} \subset \big\{(s,t)\in (0,1)^2\subset T^2:\text{$s\ne\si_i$, $t\ne\tau_i$ for all $i=1,\dots,N$}\big\}.
\]
We may assume that $\Si'$ is attached to $T^2$ along the boundary $\pa B_r(s_0,t_0)$ of $B_r(s_0,t_0)\subset (0,1)^2\subset T^2$. Then $T^2\setminus B_r(s_0,t_0) \subset \Si$, and we have a canonical projection $\Si\to T^2/\overline{B_r(s_0,t_0)}$ identifying all points outside of $T^2\setminus B_r(s_0,t_0)$ to a point. By Tietze-Urysohn, or a simple direct construction, the maps
\[
  [0,1]^2\setminus B_{2r}(s_0,t_0) \to [0,1],\quad (s,t) \mapsto s,
\]
and
\[
  \overline{B_r(s_0,t_0)} \to [0,1],\quad (s,t)\mapsto s_0,
\]
can be extended to a continuous map $[0,1]^2\to [0,1]$. By construction this map induces a continuous map $T^2/\overline{B_r(s_0,t_0)} \to T$. Composed with the canonical projection $\Si\to T^2/\overline{B_r(s_0,t_0)}$ we obtain a continous map $\rho:\Si\to T^2/\overline{B_r(s_0,t_0)} \to T$ which satisfies $\rho([s,t])=[s]$ for $[s,t]\in T^2\setminus B_{2r}(s_0,t_0) \subset\Si$. Now we define $\ga_\Si:T^N\to\cF_N\Si$ by
\[
  \ga_\Si\big([s_1,\dots,s_N]\big) := \big([s_1,\tau_1],\dots,[s_N,\tau_N]\big) \in \cF_N\big(T^2\setminus B_{2r}(s_0,t_0)\big) \subset \cF_N\Si
\]
and
\[
  \cL_\Si := \big\{\big([\si_1,t_1],\dots,[\si_N,t_N]\big): t_1,\dots,t_N\in[0,1]\big\} \subset \cF_N\big(T^2\setminus B_{2r}(s_0,t_0)\big) \subset \cF_N\Si.
\]
Clearly $\cL_\Si$ is compact and \eqref{eq:bound-torus} holds with $\ga_\Si$ replacing $\ga_{T^2}$ so that $H$ is bounded on $\cL_\Si$. It remains to prove the linking property \eqref{eq:linking}.

\begin{Lem}\label{lem:surface}
If $\ga:T^N\to\cF_N\Si$ is homotopic to $\ga_\Si$ then $\ga(T^N)\cap\cL_\Si\ne\emptyset$.
\end{Lem}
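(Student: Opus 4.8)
The plan is to run the degree argument of Lemma~\ref{lem:torus} with the first-coordinate projection replaced by the map $\rho:\Si\to T$ constructed above. First I would set
\[
  \mu:\cF_N\Si\to T^N,\qquad \mu(p_1,\dots,p_N) := \big(\rho(p_1),\dots,\rho(p_N)\big),
\]
and check that $\mu\circ\ga_\Si=\id_{T^N}$: since $\ov{B_{2r}(s_0,t_0)}$ avoids the horizontal lines $t=\tau_i$, each point $[s_i,\tau_i]$ of $\ga_\Si([s_1,\dots,s_N])$ lies in $T^2\setminus B_{2r}(s_0,t_0)$, where $\rho([s_i,\tau_i])=[s_i]$, whence $\mu\circ\ga_\Si([s_1,\dots,s_N])=[s_1,\dots,s_N]$. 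Thus $\deg(\mu\circ\ga_\Si)=1$, and exactly as in Lemma~\ref{lem:torus} the homotopy invariance of the Brouwer degree gives $\deg(\mu\circ\ga)=1$, so $\mu\circ\ga:T^N\to T^N$ is onto.

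The decisive point is then that $\mu^{-1}([\si_1,\dots,\si_N])=\cL_\Si$. Writing $C_i:=\{[\si_i,t]:t\in[0,1]\}$ for the vertical circle over $\si_i$, this reduces to the one-factor identity $\rho^{-1}([\si_i])=C_i$ for every $i$; granting it, the circles $C_1,\dots,C_N$ are pairwise disjoint (as $\si_1<\dots<\si_N$), so $\{p:\rho(p_i)=[\si_i]\text{ for all }i\}$ automatically lies in $\cF_N\Si$ and equals $\cL_\Si$ by definition. Choosing $p\in T^N$ with $\mu\circ\ga(p)=[\si_1,\dots,\si_N]$ then yields $\ga(p)\in\mu^{-1}([\si_1,\dots,\si_N])=\cL_\Si$, which is the claim.

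The main obstacle is the preimage identity $\rho^{-1}([\si_i])=C_i$, and this is the one place where a naive Tietze extension is insufficient, since such an extension might take the value $[\si_i]$ somewhere in the annulus $B_{2r}(s_0,t_0)\setminus\ov{B_r(s_0,t_0)}$. I would therefore fix the extension explicitly. Because $\ov{B_{2r}(s_0,t_0)}$ meets no line $s=\si_i$, each $\si_i$ lies strictly outside the $s$-range $[s_0-2r,s_0+2r]$ of $\ov{B_{2r}(s_0,t_0)}$. Interpolating radially on the annulus between the constant $s_0$ on $\pa B_r$ and the $s$-coordinate on $\pa B_{2r}$ keeps every annulus value inside $[s_0-2r,s_0+2r]$, while $\rho\equiv[s_0]$ on $\Si'$. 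As both $[s_0]$ and the whole interval $[s_0-2r,s_0+2r]$ avoid $\si_i$, the value $[\si_i]$ is attained only at points of $T^2\setminus B_{2r}(s_0,t_0)$ with $s$-coordinate $\si_i$, that is, precisely on $C_i$. Finally, as in the torus and Klein bottle cases $\cL_\Si$ is compact, so \eqref{eq:bound} holds and the linking data for the higher-genus surface is complete.
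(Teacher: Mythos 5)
Your proof is correct and is essentially the paper's own argument: the same coordinate-wise map $\rho^N$ (your $\mu$), the same degree and surjectivity step via homotopy invariance, and the same conclusion. The one place you go beyond the paper is a genuine improvement in care: a bare Tietze--Urysohn extension does not by itself guarantee $\rho^{-1}([\si_i])=C_i$ (an arbitrary extension could attain the value $\si_i$ on the annulus $B_{2r}(s_0,t_0)\setminus \overline{B_r(s_0,t_0)}$, which would break the final implication $\rho^N\circ\ga(p)=[\si_1,\dots,\si_N]\Rightarrow\ga(p)\in\cL_\Si$), and your explicit radial interpolation, which keeps the annulus values inside $[s_0-2r,s_0+2r]$, is precisely the ``simple direct construction'' the paper alludes to but does not spell out.
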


\begin{proof}
The proof proceeds analogous to the one of Lemma~\ref{lem:torus}. Consider the map
\[
  \rho^N:\cF_N\Si\to T^N,\quad (p_1,\dots,p_N) \mapsto (\rho(p_1),\dots,\rho(p_N)).
\]
Clearly $\rho^N\circ\ga_\Si=\id:T^N\to T^N$, hence $\deg(\rho^N\circ\ga_\Si)=1$. Since $\ga$ is homotopic to $\ga_\Si$ we obtain $\deg(\rho^N\circ\ga)=1$, hence $\rho^N\circ\ga$ must be onto. It follows that $\rho^N\circ\ga(p)=[\si_1,\dots,\si_N]$ for some $p\in T^N$ which implies $\ga(p)\in\cL_\Si$.
\end{proof}

\section{Proofs of the critical point theorems}\label{sec:proofs}

For the proof of Theorem~\ref{thm:main} we first need some control on $H$ near $\pa\cF_N\Si\subset\Si^N$.

\begin{Lem}\label{lem:collision}
  If \eqref{eq:Gamma-cond} holds then for $p^* \in \pa\cF_N\Si$ there exists $p_0\in\Si$ and $c,\de>0$ such that $I:=\big\{i\in\{1,\dots,N\}:p^*_i=p_0\big\}$ has at least two elements and such that 
  \[
    |\nabla H(p)| \ge c\left(\sum_{i\in I} d_g(p_i,p^*_i)^2\right)^{-\frac12}\quad\text{for $p\in \cF_N\Si\cap B_\de(p^*)$.}
  \]
Here $B_\de(p^*)\subset \Si^N$ is the open $\de$-neighborhood of $p^* \in \pa\cF_N\Si\subset \Si^N$.
\end{Lem}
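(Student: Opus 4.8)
The plan is to localize the singularity of $H$ at the collision point and to extract a lower bound for $\nabla H$ from a scaling (Euler) identity whose nonvanishing is exactly guaranteed by \eqref{eq:Gamma-cond}. First I would fix $p^*\in\pa\cF_N\Si$ and let $p_0$ be a point occurring at least twice among $p^*_1,\dots,p^*_N$; setting $I:=\{i:p^*_i=p_0\}$ we have $|I|\ge2$. Since the $p^*_j$ with $j\notin I$ lie at positive distance from $p_0$, choosing $\de$ small forces all $p_j$, $j\notin I$, to stay in a fixed compact set away from $p_0$ whenever $p\in B_\de(p^*)$. Consequently the interaction terms $\Ga_i\Ga_jG(p_i,p_j)$ with $i\in I$, $j\notin I$, the regular part $-\sum\Ga_i\Ga_jh(p_i,p_j)$, and the $\cC^1$ function $\Psi$ all have gradients that are \emph{bounded} uniformly on $B_\de(p^*)\cap\cF_N\Si$. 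Thus only the self-interaction of the cluster $I$ can be singular.

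Next I would introduce isothermal coordinates centered at $p_0$, writing $z_i\in\R^2$ for the coordinate of $p_i$ ($i\in I$), so that $z_i\to0$ as $p\to p^*$ and $d_g(p_i,p_0)$ is comparable to $|z_i|$. Put $R:=(\sum_{i\in I}|z_i|^2)^{1/2}$, which is comparable to $(\sum_{i\in I}d_g(p_i,p^*_i)^2)^{1/2}$. The decisive observation concerns the Euclidean model of the singular part,
\[
  W_0(z) := -\frac{1}{2\pi}\sum_{\mytop{i,j\in I}{i\ne j}}\Ga_i\Ga_j\log|z_i-z_j|,
\]
which is invariant under $z\mapsto\la z$ up to an additive constant: $W_0(\la z)=W_0(z)-\frac{\log\la}{2\pi}\Ga_I$, where $\Ga_I:=\sum_{\mytop{i,j\in I}{i\ne j}}\Ga_i\Ga_j$. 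Differentiating at $\la=1$ yields the Euler identity $\sum_{k\in I}z_k\cdot\nabla_{z_k}W_0=-\frac{\Ga_I}{2\pi}$. By \eqref{eq:Gamma-cond} we have $\Ga_I\ne0$, so the right-hand side is a nonzero constant; this is the precise point where the hypothesis is used.

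I would then show that $\nabla_{z_k}H$ agrees with $\nabla_{z_k}W_0$ up to a term bounded uniformly as the $z_i\to0$. The only nontrivial contribution is the replacement of $\log d_g(p_i,p_j)$ by $\log|z_i-z_j|$: writing $\log d_g(p_i,p_j)=\log|z_i-z_j|+\be(z_i,z_j)$ one has $\be=\frac12\log\big(d_g(p_i,p_j)^2/|z_i-z_j|^2\big)$, and since $d_g^2$ is smooth near the diagonal and, in isothermal coordinates, $d_g^2/|z_i-z_j|^2$ extends to a positive Lipschitz function, $\nabla\be$ is bounded. Hence $\sum_{k\in I}z_k\cdot\nabla_{z_k}H=-\frac{\Ga_I}{2\pi}+O(R)$, so after shrinking $\de$ the left-hand side has absolute value $\ge\frac{|\Ga_I|}{4\pi}$. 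Applying Cauchy--Schwarz to $\sum_{k\in I}z_k\cdot\nabla_{z_k}H$ gives $\big(\sum_{k\in I}|\nabla_{z_k}H|^2\big)^{1/2}\ge\frac{|\Ga_I|}{4\pi R}$, and since the Riemannian gradient norm dominates the $I$-coordinate gradient norm (the metric being close to Euclidean near $p_0$) and $R$ is comparable to $(\sum_{i\in I}d_g(p_i,p^*_i)^2)^{1/2}$, the claimed estimate follows with a suitable $c>0$.

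The main obstacle is the cancellation in the previous step: both $\nabla_{z_i}\log d_g(p_i,p_j)$ and $\nabla_{z_i}\log|z_i-z_j|$ blow up like $1/d_g(p_i,p_j)$, so the estimate relies on their leading singularities matching. This is exactly why isothermal coordinates (in which $d_g^2/|z_i-z_j|^2$ has a direction-independent limit on the diagonal, hence a bounded gradient) are the right choice; in general normal coordinates the ratio is direction-dependent and $\be$ need not even be continuous up to the diagonal. The remaining ingredients---the separation of the bounded terms and the Cauchy--Schwarz argument---are then routine once $\de$ has been chosen small enough.
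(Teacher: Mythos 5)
Your strategy parallels the paper's (localize at a collision cluster $I$, pass to conformal coordinates centered at $p_0$, reduce to the Euclidean model $W_0$, and use \eqref{eq:Gamma-cond} there), and your treatment of the model problem is a genuinely nice, self-contained alternative: where the paper simply cites \cite[Lemma~4.2]{Kuhl:2016} for the bound $|\nabla W_0(z_I)|\ge c\,\big(\sum_{i\in I}|z_i|^2\big)^{-1/2}$, your Euler identity $\sum_{k\in I}z_k\cdot\nabla_{z_k}W_0=-\frac1{2\pi}\sum_{\mytop{i,j\in I}{i\ne j}}\Ga_i\Ga_j$ together with Cauchy--Schwarz proves it in two lines, and moreover uses \eqref{eq:Gamma-cond} only for the one subset $I$ that actually collides at $p_0$.

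However, there is a genuine gap in your reduction step. You justify the uniform boundedness of the gradients of all non-model terms by the separation of the points $p_j$, $j\notin I$, from $p_0$. That argument covers the cross terms $G(p_i,p_j)$ with $i\in I$, $j\notin I$, and $\Psi$, but it says nothing about the regular parts $h(p_i,p_j)$ with \emph{both} $i,j\in I$: these are evaluated at pairs converging to the diagonal of $\Si\times\Si$, and the assertion that $\nabla h$ stays bounded there is exactly the nontrivial point of the lemma. It does not follow from $h$ being ``the regular part'': $h$ is the difference of $-\frac1{2\pi}\log d_g(p,q)$ and $G(p,q)$, each of which has unbounded gradient at the diagonal, so the cancellation of their singular derivatives must be proved. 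This is precisely what the paper's Lemma~\ref{lem:conformal-Green} is for: replacing $g$ by a conformal metric $\wt g$ that is genuinely flat near $p_0$, one gets $\wt G=G+F$ with $F$ smooth, and the regular part $\wt h$ of the \emph{flat} Green function is then smooth across the diagonal by standard elliptic regularity (in the flat chart the delta functions cancel exactly, leaving a Poisson equation with smooth right-hand side). Your Lipschitz bound for $\be=\log\big(d_g(p_i,p_j)/|z_i-z_j|\big)$ --- which is correct, and correctly tied to isothermal coordinates --- only handles the discrepancy between $\log d_g$ and $\log|z_i-z_j|$; it does not touch the discrepancy between $G$ and $-\frac1{2\pi}\log d_g$, i.e.\ $h$ itself. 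To close the gap you would need to prove in addition that $G(p_i,p_j)+\frac1{2\pi}\log|z_i-z_j|$ extends to a $\cC^{0,1}$ (or smoother) function across the diagonal in your chart, which is again the elliptic-regularity/conformal-comparison step. A minor further imprecision: if $p^*$ contains a second collision cluster, the terms $G(p_i,p_j)$ with $i,j\notin I$ in that cluster also have unbounded gradients, so ``only the self-interaction of the cluster $I$ can be singular'' is false as stated; this is harmless, since those terms do not depend on the variables $z_k$, $k\in I$, entering your scaling identity, but it should be said.
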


\begin{proof}
The existence of $p_0\in\Si$ such that $I:=\big\{i\in\{1,\dots,N\}:p^*_i=p_0\big\}$ has at least two elements follows immediately from $p^*\in\pa\cF_N(\Si)$. Since every surface is locally conformally flat there exists an open neighborhood $U\subset\Si$ of $p_0$, a conformal metric $\wt g = e^ug$, and a chart $\psi:U\to V\subset\R^2$ with $\psi(p_0)=0$ and such that the Christoffel symbols of $\wt g$ are trivial. This implies that
\[
  d_{\wt g}(p,q) = |\psi(p)-\psi(q)| \qquad\text{for $p,q\in U$.}
\]
We may assume that $u:\Si\to\R$ is globally defined so that $\wt g$ is a metric on all of $\Si$. Let $\wt G(p,q) =  -\frac1{2\pi}\log d_{\wt g}(p,q)-\wt h(p,q)$ denote a Green function for the Laplace-Beltrami operator of $(\Si,\wt g)$ with regular part $\wt h:\Si\times\Si\to\R$. By Lemma~\ref{lem:conformal-Green} below there exists $F\in\cC^\infty(\Si\times\Si)$ such that $\wt G = G+F$. Since $d_g(p_i,p_j)$ is bounded away from $0$ as $p\to p^*$ for $i\in I$, $j\notin I$ we have, with $p_I:=(p_i)_{i\in I}$,
\[
\begin{aligned}
  H(p) &= -\frac1{2\pi}\sum_{\mytop{i,j\in I}{i\ne j}}^N
	          \Ga_i\Ga_j \log d_{\wt g}(p_i,p_j) + R(p) \\
       &= -\frac1{2\pi}\sum_{\mytop{i,j\in I}{i\ne j}}
			      \Ga_i\Ga_j \log|\psi(p_i)-\psi(p_j)| +R(p) \equiv H_I(p_I) + R(p)
\end{aligned}
\]
where $R$ satisfies
\[
  \nabla_{p_i} R(p) = O(1) \qquad\text{as $p\to p^*$, $i\in I$.}
\]
The map $\psi_I(p_I):=\big(\psi(p_i)\big)_{i\in I}$ is defined if $p_i\in U$ for $i\in I$, hence if $p$ is close to $p^*$. Setting $z_I=(z_i)_{i\in I} \in (\R^2)^{I}$ and
\[
  H_\psi^I(z_I) = -\frac1{2\pi}\sum_{\mytop{i,j\in I}{i\ne j}} \Ga_i\Ga_j \log|z_i-z_j|
\]
we have $H_I=H_\psi^I\circ\psi_I$. Since $\psi_I$ is a diffeomorphism the lemma follows if there exists $c,\de>0$ such that
\[
  \big|\nabla H_\psi^I(z_I)\big| \ge  c\left(\sum_{i\in I} |z_i|^2\right)^{-\frac12}\quad\text{for $|z_I| < \de$.}
\]
This has been proved proved in \cite[Lemma~4.2]{Kuhl:2016}, using assumption \eqref{eq:Gamma-cond}.
\end{proof}

\begin{Lem}\label{lem:conformal-Green}
  Let $\wt g = e^ug$ be a conformal metric to $g$ on $\Si$ and $\wt G$ the Green's function for the Laplace-Beltrami operator of $(\Si,\wt g)$. Then there exists $F\in\cC^\infty(\Si\times\Si)$ such that $\wt G = G+F$.
\end{Lem}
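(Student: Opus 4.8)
The plan is to exploit the conformal covariance of the Laplace--Beltrami operator in dimension two. First I would record the transformation laws for $\wt g = e^ug$ on a surface, namely $dv_{\wt g} = e^u\,dv_g$ and $\De_{\wt g} = e^{-u}\De_g$, which together yield the conformal invariance of the Dirichlet pairing, $(\De_{\wt g}\varphi)\,dv_{\wt g} = (\De_g\varphi)\,dv_g$ for test functions $\varphi$. Using this invariance I would transport the defining equation \eqref{eq:Green} of $G$ from $g$ to $\wt g$: since $\int_\Si G(p,\cdot)\De_{\wt g}\varphi\,dv_{\wt g} = \int_\Si G(p,\cdot)\De_g\varphi\,dv_g$, the right-hand side of \eqref{eq:Green} is reproduced, and after re-expressing the background term against $dv_{\wt g}$ I obtain, distributionally with respect to $dv_{\wt g}$,
\[
  \De_{\wt g}G(p,\cdot) = \de_p - \frac{e^{-u}}{\vol_g(\Si)}.
\]
The decisive feature is that the Dirac mass keeps coefficient $1$ (it is the standard Dirac mass, independent of the metric), while only the constant background term gets rescaled.

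Next I would subtract this from the defining equation $\De_{\wt g}\wt G(p,\cdot) = \de_p - \tfrac1{\vol_{\wt g}(\Si)}$ of $\wt G$. The two Dirac masses cancel exactly, so setting $F(p,\cdot) := \wt G(p,\cdot) - G(p,\cdot)$ leaves $\De_{\wt g}F(p,\cdot) = \Phi$, where $\Phi := \frac{e^{-u}}{\vol_g(\Si)} - \frac1{\vol_{\wt g}(\Si)} \in \cC^\infty(\Si)$ is independent of $p$ and satisfies $\int_\Si\Phi\,dv_{\wt g} = 0$. Standard elliptic regularity on the closed surface then gives a unique $F_0 \in \cC^\infty(\Si)$ with $\De_{\wt g}F_0 = \Phi$ and $\int_\Si F_0\,dv_{\wt g} = 0$; since $F(p,\cdot) - F_0$ is $\wt g$-harmonic on a closed surface it is constant, so $F(p,q) = F_0(q) + \kappa(p)$ for some function $\kappa$ (the identity holding for all $q\ne p$ after comparing continuous representatives off the diagonal).

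Finally I would upgrade this fibrewise statement to joint smoothness. Both Green functions are symmetric, so $F$ is symmetric off the diagonal; substituting $F(p,q) = F_0(q) + \kappa(p) = F_0(p) + \kappa(q)$ forces $\kappa - F_0$ to be constant, whence $\kappa \in \cC^\infty(\Si)$ and $F(p,q) = F_0(p) + F_0(q) + \text{const}$ extends to a smooth function on $\Si\times\Si$ agreeing with $\wt G - G$ off the diagonal. I expect the main obstacle to be the exact cancellation of the singular parts: it rests on the two-dimensional identity $(\De_{\wt g}\varphi)\,dv_{\wt g} = (\De_g\varphi)\,dv_g$ (equivalently, conformal invariance of the Dirichlet energy), which is what guarantees that the logarithmic singularities of $G$ and $\wt G$ carry the same coefficient and that only a smooth, $p$-independent remainder survives. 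Promoting the resulting per-fibre regularity to joint $\cC^\infty$ smoothness is then handled cleanly by the symmetry argument, avoiding any elliptic-estimate-with-parameters bookkeeping.
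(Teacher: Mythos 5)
Your proof is correct, and it reaches the paper's conclusion by a genuinely different route than the paper itself takes. The paper argues by ansatz and verification: it solves a single Poisson problem $\De_g w = \frac{e^{2u}}{\vol_{\wt g}(\Si)}-\frac1{\vol_g(\Si)}$ on $(\Si,g)$, defines $F(p,q)=c-w(p)-w(q)$ with an explicit constant $c$, checks by direct computation that $G+F$ satisfies both conditions in \eqref{eq:Green} for the metric $\wt g$, and then concludes $\wt G=G+F$ from the \emph{uniqueness} of the Green function; joint smoothness of $F$ is immediate from the explicit formula. You instead analyze the difference $F=\wt G-G$ head-on: the conformal invariance $(\De_{\wt g}\vphi)\,dv_{\wt g}=(\De_g\vphi)\,dv_g$ shows the two Dirac masses cancel, elliptic regularity (Weyl's lemma) makes $F(p,\cdot)$ smooth across $p$, the fibrewise equation $\De_{\wt g}F(p,\cdot)=\Phi$ is solved by $F_0$ up to a constant $\kappa(p)$, and the symmetry of the two Green functions identifies $\kappa=F_0+\mathrm{const}$, yielding joint smoothness. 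The underlying computation is the same: your $F_0$ equals $-w$ up to an additive constant, and your output $F(p,q)=F_0(p)+F_0(q)+\mathrm{const}$ is exactly the separated form the paper takes as its ansatz (the discrepancy between your factors $e^{\pm u}$ and the paper's $e^{\pm 2u}$ is purely notational: the paper writes $\wt g=e^ug$ but de facto uses $dv_{\wt g}=e^{2u}dv_g$, while your formulas are the consistent ones for $\wt g=e^ug$). What each route buys: the paper's needs only elementary integration identities plus uniqueness of the solution of \eqref{eq:Green}, and never invokes regularity theory or symmetry of Green functions; your route needs no ansatz and no uniqueness statement, but does rely on Weyl's lemma and on $G(p,q)=G(q,p)$, $\wt G(p,q)=\wt G(q,p)$ --- a standard fact for the normalization \eqref{eq:Green}, which you should at least flag (Green's identity applied to $G(p,\cdot)$ and $G(q,\cdot)$ proves it in two lines). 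If you wanted to avoid symmetry entirely, you could instead integrate your fibrewise identity in $q$ against $dv_{\wt g}$ and use $\int_\Si\wt G(p,\cdot)\,dv_{\wt g}=0$ to obtain an explicit, manifestly smooth formula for $\kappa(p)$.
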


\begin{proof}
Recall that for $f\in\cC^2(\Si)$ there holds
\begin{equation}\label{eq:conformal}
  \De_{\wt g}f=e^{-2u}\De_gf\qquad\text{and}\qquad \int_\Si e^{-2u}f\,dv_{\wt g} = \int_\Si f\,dv_g.
\end{equation}
The problem
\begin{equation}\label{eq:w}
  \De_gw = \frac{e^{2u}}{\vol_{\wt g}(\Si)}-\frac1{\vol_g(\Si)}
\end{equation}
has a solution $w\in\cC^\infty(\Si)$ because
\[
  \int_\Si \left(\frac{e^{2u}}{\vol_{\wt g}(\Si)}-\frac1{\vol_g(\Si)}\right)\,dv_g = 0
\]
Now we define $F:\Si\times\Si\to\R$ by 
\[
  F(p,q) = c-w(p)-w(q)\quad\text{with }\ c = \frac1{\vol_{\wt g}(\Si)}\int_\Si w\,dv_{\wt g} + \frac1{\vol_{g}(\Si)}\int_\Si w\,dv_{g}.
\]
Then we obtain for $f\in\cC^2(\Si)$ and $p\in\Si$, using \eqref{eq:Green}, \eqref{eq:conformal}, \eqref{eq:w}:
\[
\begin{aligned}
  &\int_\Si  \big(G(p,\cdot)+F(p,\cdot)\big)\De_{\wt g}f\,dv_{\wt g} = \int_\Si \big(G(p,\cdot)+c-w(p)-w\big)e^{-2u}\De_g f\,dv_{\wt g}\\
    &\hspace{1cm}= \int_\Si \big(G(p,\cdot)+c-w(p)-w\big)\De_g f\,dv_{g}\\
    &\hspace{1cm}= f(p) - \frac1{\vol_g(\Si)}\int_\Si f\,dv_g - \int_\Si \big(\De_g w)f\,dv_g\\
    &\hspace{1cm}= f(p) - \frac1{\vol_g(\Si)}\int_\Si f\,dv_g - \left(\frac1{\vol_{\wt g}(\Si)}\int_\Si e^{2u}f\,dv_g -  \frac1{\vol_g(\Si)}\int_\Si f\,dv_g\right)\\
    &\hspace{1cm}= f(p) - \frac1{\vol_{\wt g}(\Si)}\int_\Si f\,dv_{\wt g}
\end{aligned}
\]
Thus we have
\[
  \De_{\wt g}(G+F)(p,\cdot) = \de_p -\frac1{\vol_{\wt g}(\Si)}\qquad\text{in the distributional sense.}
\]
Concerning the normalization property in \eqref{eq:Green} we compute for $p\in\Si$, using again \eqref{eq:Green}, \eqref{eq:conformal}, \eqref{eq:w}, and the definition of $c$:
\[
\begin{aligned}
  &\int_\Si \big(G(p,\cdot)+F(p,\cdot)\big)\,dv_{\wt g} = \int_\Si G(p,\cdot)\,dv_{\wt g} + c\cdot\vol_{\wt g}(\Si) -w(p)\cdot\vol_{\wt g}(\Si) - \int_\Si w\,dv_{\wt g}\\
  &\hspace{1cm}= \int_\Si G(p,\cdot)\,dv_{\wt g} + c\cdot\vol_{\wt g}(\Si) -\left(\int_\Si G(p,\cdot)\De_gw\,dv_g+\frac1{\vol_g(\Si)}\int_\Si w\,dv_g\right)\vol_{\wt g}(\Si)\\
  &\hspace{2cm} - \int_\Si w\,dv_{\wt g}\\
  &\hspace{1cm}=  \int_\Si G(p,\cdot)\,dv_{\wt g} + c\cdot\vol_{\wt g}(\Si) -\int_\Si G(p,\cdot)\left(\frac{e^{2u}}{\vol_{\wt g}(\Si)}-\frac1{\vol_g(\Si)}\right)\,dv_g\cdot\vol_{\wt g}(\Si)\\
  &\hspace{2cm} -\frac{\vol_{\wt g}(\Si)}{\vol_g(\Si)}\int_\Si w\,dv_g - \int_\Si w\,dv_{\wt g}\\
  &\hspace{1cm}= c\cdot\vol_{\wt g}(\Si) -\frac{\vol_{\wt g}(\Si)}{\vol_g(\Si)}\int_\Si w\,dv_g - \int_\Si w\,dv_{\wt g}\\
  &\hspace{1cm}= 0.
\end{aligned}
\]
Therefore $G+F$ satisfies \eqref{eq:Green} with $\wt g$ instead of $g$, hence $\wt G=G+F$. 
\end{proof}

Let $\vphi$ be the gradient flow of $H$, and let $T^+(p)\in(0,\infty]$ be the maximal life time of $p$, i.e.\ $\vphi(p,t)$ is defined for $t\in\big[0,T^+(p)\big)$. 

\begin{Lem}\label{lem:flow limit}
  If \ $\lim_{t\to T^+(p)} H\big(\vphi(p,t)\big) <\infty$ for some $p\in\cF_N(\Si)$ then $T^+(p)=\infty$. Moreover there exists a sequence $t_n\to\infty$ and $p^*\in\cF_N\Si$ with $\vphi(p,t_n)\to p^*$ and $\nabla H\big(\vphi(p,t_n)\big) \to 0$. In particular $p^*$ is a critical point of $H$.
\end{Lem}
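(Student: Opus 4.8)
The plan is to run the standard gradient-flow compactness argument, the only delicate point being to rule out that the trajectory escapes into the collision set $\pa\cF_N\Si$; this is exactly where the gradient blow-up of Lemma~\ref{lem:collision} (and hence condition \eqref{eq:Gamma-cond}) enters. Since $\vphi$ is the gradient flow of $H$, along a trajectory one has $\frac{d}{dt}H\big(\vphi(p,t)\big)=\big|\nabla H(\vphi(p,t))\big|^2\ge0$, so $H\circ\vphi(p,\cdot)$ is nondecreasing. The hypothesis says this nondecreasing function is bounded above, so its limit is finite and integration yields the \emph{energy bound}
\[
  E:=\int_0^{T^+(p)}\big|\nabla H(\vphi(p,s))\big|^2\,ds
     =\lim_{t\to T^+(p)}H\big(\vphi(p,t)\big)-H(p)<\infty.
\]

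Next I would show $T^+(p)=\infty$. Suppose $T^+(p)<\infty$. By Cauchy--Schwarz the trajectory has finite length, $\int_0^{T^+(p)}|\dot\vphi|=\int_0^{T^+(p)}|\nabla H|\le (T^+(p))^{1/2}E^{1/2}<\infty$, so $\vphi(p,t)$ converges in the compact space $\Si^N$ to some $p^*$ as $t\to T^+(p)$. If $p^*\in\cF_N\Si$ the flow would extend beyond $T^+(p)$, contradicting maximality; hence $p^*\in\pa\cF_N\Si$. Here is the crux. Let $I,c,\de$ be as in Lemma~\ref{lem:collision} and set $r(t):=\big(\sum_{i\in I}d_g(\vphi_i(p,t),p^*_i)^2\big)^{1/2}$, which is strictly positive (the trajectory stays in $\cF_N\Si$, so the $I$-vortices never collide) and tends to $0$. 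Since each $d_g(\,\cdot\,,p^*_i)$ is $1$-Lipschitz, $|\tfrac{d}{dt}d_g(\vphi_i,p^*_i)|\le|\nabla_{p_i}H|$, and a Cauchy--Schwarz step over $i\in I$ gives $|\dot r|\le|\nabla H(\vphi)|$; meanwhile $|\nabla H(\vphi)|\ge c/r$ once $\vphi(p,t)\in B_\de(p^*)$. Choosing $T_0$ close to $T^+(p)$ so that $\vphi(p,t)\in B_\de(p^*)$ for $t\ge T_0$, I obtain for $t\in(T_0,T^+(p))$
\[
  \int_{T_0}^{t}\big|\nabla H(\vphi(p,s))\big|^2\,ds
     \ge c\int_{T_0}^{t}\frac{|\dot r(s)|}{r(s)}\,ds
     \ge c\,\bigl|\log r(t)-\log r(T_0)\bigr|,
\]
which tends to $\infty$ as $t\to T^+(p)$ since $r(t)\to0$. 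This contradicts the energy bound $E<\infty$, so $T^+(p)=\infty$.

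Finally, with $T^+(p)=\infty$ the energy bound forces $\liminf_{t\to\infty}|\nabla H(\vphi(p,t))|=0$ (otherwise $\int_0^\infty|\nabla H|^2=\infty$), so there is a sequence $t_n\to\infty$ with $\nabla H(\vphi(p,t_n))\to0$. By compactness of $\Si^N$ a subsequence of $\vphi(p,t_n)$ converges to some $p^*\in\Si^N$. If $p^*\in\pa\cF_N\Si$, then by Lemma~\ref{lem:collision} one would have $|\nabla H(\vphi(p,t_n))|\ge c/r(t_n)\to\infty$, contradicting $\nabla H(\vphi(p,t_n))\to0$; hence $p^*\in\cF_N\Si$, and continuity of $\nabla H$ gives $\nabla H(p^*)=0$, so $p^*$ is a critical point. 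The main obstacle throughout is the second step: ruling out finite-time escape into the collision set, which is settled by the logarithmic divergence obtained from playing the gradient lower bound $|\nabla H|\gtrsim 1/r$ of Lemma~\ref{lem:collision} against the finite energy $E$; everything else is routine.
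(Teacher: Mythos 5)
Your proof is correct and follows the same overall strategy as the paper's: derive the finite energy bound $\int|\nabla H(\vphi)|^2\,dr<\infty$ from monotonicity of $H$ along the flow, show via Cauchy--Schwarz that a finite-time trajectory has finite length and hence converges to some $p^*\in\pa\cF_N\Si$, then play the gradient lower bound of Lemma~\ref{lem:collision} against the finite energy to produce a logarithmic divergence, and finally obtain the critical point by compactness of $\Si^N$ plus Lemma~\ref{lem:collision} once more. The one place where you genuinely deviate is the execution of the key estimate. The paper re-imports the conformal metric $\wt g$ and the flat chart $\psi$ from the proof of Lemma~\ref{lem:collision}, sets $z_i(t)=\psi\big(\vphi_i(p,t)\big)$ for $i\in I$, and computes
\[
  \int_s^t\big|\nabla H(\vphi)\big|^2\,dr
    \ \ge\ \wt c\int_s^t\Big(\sum_{i\in I}|z_i|^2\Big)^{-\frac12}\Big(\sum_{i\in I}|\dot z_i|^2\Big)^{\frac12}dr
    \ \ge\ \frac{\wt c}2\,\ln\frac{\sum_{i\in I}|z_i(s)|^2}{\sum_{i\in I}|z_i(t)|^2},
\]
whereas you stay intrinsic: from the $1$-Lipschitz property of $d_g(\cdot,p^*_i)$ you get $|\dot r|\le|\nabla H(\vphi)|$ for $r(t)=\big(\sum_{i\in I}d_g(\vphi_i(p,t),p^*_i)^2\big)^{1/2}$, hence $|\nabla H|^2\ge c\,|\dot r|/r$ and the same logarithmic blow-up, with no chart needed at this stage. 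Your version is arguably the cleaner write-up, at the modest price that $r(t)$ is only Lipschitz rather than $\cC^1$ (the distance function is not differentiable at $p^*_i$ itself, and $\vphi_i(p,t)$ may pass through that point), so the chain rule and the fundamental theorem of calculus must be invoked in the a.e.\ (absolutely continuous) sense; the paper's chart-based computation is smooth throughout. One cosmetic remark: in your final compactness step the quantity $r(t_n)$ must be re-defined relative to the new subsequential limit $p^*$ and the index set $I$ that Lemma~\ref{lem:collision} assigns to it, not the one from the finite-time argument; the intended meaning is clear and the conclusion stands.
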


\begin{proof}
Set $c^*:=\lim_{t\to T^+(p)} H\big(\vphi(p,t)\big)$ and suppose $T^+(p)<\infty$. Arguing as in the proof of \cite[Lemma~4.7, Lemma~4.8]{Kuhl:2016} we first observe that we have for $0<s<t<T^+(p)$:
\[
\begin{aligned}
  &d_g^N\big(\vphi(p,s),\vphi(p,t)\big) 
        \le \int_s^t\big|\nabla H\big(\vphi(p,r)\big)\big|\,dr \le \sqrt{t-s}\left(\int_s^t \big|\nabla H\big(\vphi(p,r)\big)\big|^2\,dr\right)^{\frac12}\\
  &\hspace{1cm}= \sqrt{t-s}\sqrt{H\big(\vphi(p,t)\big)-H\big(\vphi(p,s)\big)} \le \sqrt{T^+(p)-s}\cdot\sqrt{c^*-H\big(\vphi(p,s)\big)}
\end{aligned}
\]
Here $d_g^N$ denotes the induced distance on $\Si^N$. It follows that $\vphi(p,t) \to p^*\in\pa\cF_N(\Si)$. Let $p_0\in\Si$, $I\subset\{1,\dots,N\}$ and $c>0$ be as in Lemma~\ref{lem:collision}, and let $\wt G$ be a conformal metric to $g$, $\psi:(U,p_0)\to (V,0)$ be a chart of $\Si$ around $p_0$ as in the proof of Lemma~\ref{lem:collision}. Setting $z_i(t):=\psi\big(\vphi_i(p,t)\big)$ for $i\in I$, so that $z_i(t)\to\psi(p_0)=0$ as $t\to T^+(p)$,  Lemma~\ref{lem:collision} implies for $0<s<t<T^+(p)$ close to $T^+(p)$:
\[
\begin{aligned}
  H\big(\vphi(p,t)\big)-H\big(\vphi(p,s)\big) &= \int_s^t \big|\nabla H\big(\vphi(p,r)\big)\big|^2\,dr \\
      &\ge c\int_s^t \left(\sum_{i\in I} d_g\big(\vphi_i(p,r),p_0\big)^2\right)^{-\frac12}\cdot\big|\dot\vphi(p,r)\big|_g\,dr\\
      &\ge \wt c\int_s^t \left(\sum_{i\in I} d_{\wt g}\big(\vphi_i(p,r),p_0\big)^2\right)^{-\frac12}\cdot\big|\dot\vphi(p,r)\big|_{\wt g}\,dr\\
      &\ge \wt c\int_s^t\left(\sum_{i\in I} \big|z_i(r)\big|^2\right)^{-\frac12} \cdot \left(\sum_{i\in I} \big|\dot z_i(r)\big|^2\right)^{\frac12}\,dr\\
      &\ge -\wt c\int_s^t\left(\sum_{i\in I} \big|z_i(r)\big|^2\right)^{-\frac12} \cdot \frac{d}{dr}\left(\sum_{i\in I} \big|z_i(r)\big|^2\right)^{\frac12}\,dr\\\\
      &= \frac{\wt c}2 \ln\frac{\sum_{i\in I} \big|z_i(s)\big|^2}{\sum_{i\in I} \big|z_i(t)\big|^2} \to \infty \qquad\text{as $t\to T^+(p)$.}
\end{aligned}
\]
This contradicts the assumption $H\big(\vphi(p,t)\big) \to c^*$ as $t\to T^+(p)$, hence $T^+(p)=\infty$.

Now $H\big(\vphi(p,t)\big) \to c^*$ as $t\to \infty$ implies that there exists a sequence $t_n\to\infty$ such that $\nabla H\big(\vphi(p,t_n)\big) \to 0$. Passing to a subsequence we may assume that $\vphi(p,t_n) \to p^* \in \Si^N$. Then Lemma~\ref{lem:collision} implies $p^* \in \cF_N(\Si)$.
\end{proof}

\begin{altproof}{Theorem~\ref{thm:main}}
Let $\ga\simeq\ga_\Si:T^N\to \cF_N\Si$ and $\cL_\Si\subset \cF_N\Si$ be as in Section~\ref{sec:linking} so that \eqref{eq:bound} and \eqref{eq:linking} hold. 
We claim that there exists $x\in T^N$ such that 
\begin{equation}\label{eq:limit}
  \lim_{t\to T^+(\ga(x))} H\big(\vphi\big(\ga(x),t)\big)
	  \le \sup_{\ga\simeq\ga_\Si} \min_{T^N} H\circ\ga =: c^*
		\le \max H(\cL_\Si).
\end{equation}
Arguing by contradiction and using the compactness of $T^N$, we assume that there exists $\de>0$ such that  $\lim_{t\to T^+(\ga(x))} H\big(\vphi\big(\ga(x),t)\big) > c^*+\de$ for all $x\in T^N$. Now we consider the function
\[
  \tau: T^N\to[0,\infty),\quad \tau(x) := \min\{t\ge0: H\big(\vphi\big(\ga(x),t\big)\big) \ge c^*+\de\big\}.
\]
This is well defined by \eqref{eq:limit}, and it is continuous because $H\big(\vphi\big(\ga(x),t\big)\big)$ is strictly increasing in $t$. Then
\[
  \ga_1: T^N \to \cF_N(\Si),\quad \ga_1(x) := \vphi\big(\ga(x),\tau(x)\big),
\] 
is continuous and homotopic to $\ga$, hence to $\ga_\Si$, using the homotopy
\[
  T^N\times [0,1] \to \cF_N(\Si),\quad (x,t) \mapsto \vphi\big(\ga(x),t\tau(x)\big).
\]
However, $\min H\circ\ga_1 \ge c^*+\de$, contradicting \eqref{eq:linking}. This proves \eqref{eq:limit}, so that Theorem~\ref{thm:main} follows from Lemma~\ref{lem:flow limit}.
\end{altproof}

\begin{Rem}\label{rem:main-proof}
  It is easy to see that $c^* = \sup_{\ga\simeq\ga_\Si} \min_{T^N} H\circ\ga$ is a critical value as claimed in Remark~\ref{rem:main}~a). For all $\eps>0$ there exists $\ga\simeq\ga_\Si$ with $\min_{T^N} H\circ\ga > c^*-\eps$. The proof of Theorem~\ref{thm:main} shows that there exists $x\in T^N$ such that
	\[
	  c^*-\eps < H\big(\vphi(\ga(x),t)\big) \le c^*
		  \quad \text{for all $0 \le t < T^+\big(\ga(x)\big)$.}
	\]
	This implies that there exists a critical point $p_\eps\in\cF_N\Si$ of $H$ with $c^*-\eps<H(p_\eps)\le c^*$. Then $p_\eps \to p^* \in \cF_N\Si$ as $\eps \to 0$ along a subsequence. Clearly $p^*$ is a critical point of $H$ at the level $c^*$, and $Crit(H,c^*)$ is compact by Lemma~\ref{lem:collision}.
\end{Rem}

\begin{altproof}{Theorem~\ref{thm:symmetry} a)}
Since $\tau$ is an isometry the Green function is invariant under $\tau$, i.e.\ $G\big(\tau(p),\tau(q)\big)=G(p,q)$. This implies that $H$ is invariant under the induced action of $\tau$ on $\cF_3\Si$. The fixed point set of this action is $(\cF_3\Si)^\tau = \cF_3(\Si^\tau)$. Then the principle of symmetric criticality implies that a critical point of $H|_{\cF_3(\Si^\tau)}$ is also a critical point of $H$. Since $S\subset\Si^\tau$ is a connected component a critical point of $H|_{\cF_3S}$ is a critical point of $H$. According to \cite[Section~2]{Palais:1979} the submanifold $S$ is totally geodesic, hence there exists a periodic covering map $\pi:\R\to S$ which is a closed geodesic with period $L$ being the length of $S$. This implies that $d_g\big(\pi(s),\pi(t)\big) = |s-t|$ if $|s-t| \le \frac{L}2$. 

Now we consider the configuration $p^s:=\big(\pi(-s),\pi(0),\pi(s)\big) \in\cF_3S\subset\cF_3\Si$ for $s>0$ small and observe that
\[
\begin{aligned}
  H(p^s) &= -\frac1{\pi}\big(\Ga_1\Ga_2\log d_g\big(p^s_1,p^s_2\big) + \Ga_1\Ga_3\log d_g\big(p^s_1,p^s_3\big)
                       + \Ga_2\Ga_3\log d_g\big(p^s_2,p^s_3\big)\big) +O(1)\\
             &= -\frac1{\pi}(\Ga_1\Ga_2+\Ga_1\Ga_3+\Ga_2\Ga_3)\log s + O(1)\\
             &\to \infty\qquad\text{as $s\to0$}
\end{aligned}
\]
because $\Ga_1\Ga_2+\Ga_1\Ga_3+\Ga_2\Ga_3>0$ by assumption. Similarly we obtain for $s<\frac{L}2$ close to $\frac{L}2$:
\[
  H(p^s) = -\frac1{2\pi}\Ga_1\Ga_3\log\big({\textstyle \frac{L}2}-s\big) + O(1)
	       \to \infty\qquad\text{as $\textstyle s\to\frac{L}2$}
\]
because $\sign\Ga_i=(-1)^i$ by assumption. This sign condition also implies that
\[
  \textstyle \al := \sup\big\{H(p): p\in \cF_3S,\ d_g(p_1,p_3)=\frac{L}2\big\} < \infty,
\]
because fixing $p_1,p_3 \in S$ with $d_g(p_1,p_3)=\frac L2$, we have $H(p_1,p_2,p_3)\to-\infty$ as $p_2\to p_1$ or $p_2\to p_3$. Now we choose $0<\eps<\frac{L}4$ small so that $H(p^s) > \al$  for $0 < s \le \eps$ and for $\frac{L}2-\eps \le s < \frac{L}2$. Setting
\[
  \Ga := \left\{\ga:[0,1]\to\cF_3S: \ga\text{ is continuous},\ \ga(0) = p^\eps,\ \ga(1) = p^{\frac{L}2-\eps}\right\}
\]
we claim that 
\[
  c^*:= \sup_{\ga\in\Ga}\min_{0\le t\le 1} H\big(\ga(t)\big) \le\al.
\]
Given $\ga\in\Ga$ let $w:[0,1]\to\R^3$ be a lift of $\ga$, i.e.\ $\pi\circ w_i = \ga_i$ for $i=1,2,3$, such that $w(0) = (-\eps,0,\eps)$, hence $w_3(0)-w_1(0) = 2\eps < \frac{L}2$. From $w_1(t) < w_2(t) < w_3(t)$ for all $t\in[0,1]$ and $w_1(1) = -\frac{L}2+\eps$ mod $L$,  $w_3(1) = \frac{L}2-\eps$ mod $L$ we obtain that $w_3(1)-w_1(1) \ge L-2\eps > \frac{L}2$. Consequently $w_3(t)-w_1(t)=\frac{L}2$ for some $t\in[0,1]$. This implies that $d_g\big(\ga_1(t),\ga_3(t)\big) = \frac{L}2$, hence $H\big(\ga(t)\big) \le\al$. Now one can argue as in the proof of Theorem~\ref{thm:main} and Remark~\ref{rem:main-proof} in order to obtain a critical point $p^*\in\cF_3S$ of $H$ at the level $c^*$.
\end{altproof}

\begin{altproof}{Theorem~\ref{thm:symmetry} b)}
First observe that the assumptions $\Ga_1\Ga_2+\Ga_1\Ga_3+\Ga_2\Ga_3>0$ and $\Ga_1=\Ga_3<0<\Ga_2$ imply $\Ga_1+2\Ga_2 < 0$. We consider the involution
\[
  \si:\cF_3\Si \to \cF_3\Si,\quad\si(p_1,p_2,p_3) = \big(\tau(p_3),\tau(p_2),\tau(p_1)\big)
\]
with fixed point set
\[
  F := (\cF_3\Si)^\si = \{(p_1,p_2,p_3):p_2\in \Si^\tau,\ p_3=\tau(p_1)\notin\Si^\tau\} \subset \cF_3\Si.
\]
Since $H$ is invariant under $\tau$ and $\Ga_1=\Ga_3$ it follows that $H$ is invariant under $\si$, hence it suffices to find a critical point of $H|_F$. We claim that $H(p^n) \to \infty$ if $p^n\in F$ and $p^n \to \pa\cF_3\Si$. Consider a sequence $p^n \in F$ with $p^n \to p^* \in \pa\cF_3\Si\subset\Si^3$. This implies $p^*_2\in \Si^\tau$ and $p^*_3=\tau(p^*_1)$. If $p^*_1 = p^*_3 \ne p^*_2$ then $d_g(p^n_1,p^n_3) \to 0$ and $d_g(p^n_1,p^n_2)$ is bounded away from $0$, thus $H(p^n) \to \infty$. If $p^*_1=p^*_2 \in \Si^\tau$ then $p^*_3=\tau(p^*_1) = p^*_1$. Using $d_g(p^n_3,p^n_2)=d_g\big(\tau(p^n_3),\tau(p^n_2)\big) = d_g(p^n_1,p^n_2)$, hence $d_g(p^n_1,p^n_3) \le 2d_g(p^n_1,p^n_2)$ we obtain:
\[
\begin{aligned}
  H(p^n) &= -\frac1{2\pi}\big(2\Ga_1\Ga_2\log d_g(p^n_1,p^n_2) + \Ga_1^2\log d_g(p^n_1,p^n_3)\big) + O(1)\\
    &\ge -\frac1{2\pi}\big(2\Ga_1\Ga_2\log d_g(p^n_1,p^n_2) + \Ga_1^2\log d_g(p^n_1,p^n_2)\big) + O(1)\\
    &= -\frac1{2\pi}\Ga_1\log\big(d_g(p^n_1,p^n_2)^{2\Ga_2+\Ga_1}\big) + O(1)\\
             &\to \infty
\end{aligned}  
\]
From $H(p^n) \to \infty$ as $F \ni p^n \to \pa\cF_3\Si$ it follows that $H$ is bounded from below on $F$. Then $\inf_FH$ is achieved in $F$ and is a critical value.
\end{altproof}

\begin{altproof}{Proposition~\ref{prop:generic-K}}
a) For $K\in\cC^2(\Si,\R^+)$ we set $f := \log K \in Y :=\cC^2(\Si)$ and define
\[
  H_f:\cF_N\Si\to\R,\quad
  H_f(p_1,\dots,p_N) =  \sum_{\mytop{i,j=1}{i\ne j}}^N \Ga_i\Ga_jG(p_i,p_j) +  \sum_{i=1}^N\big(\Ga_i^2h(p_i,p_i) + f(p_i)\big).
\]
Clearly the set
\[
  \cM := \{f\in Y: \text{$H_f$ is a Morse function}\}
\]
is an open subset of $Y$ because the set of critical points of $H_f$ is finite for $f\in\cM$ by Lemma~\ref{lem:collision}. It remains to prove that $\cM$ is also dense in $Y$. We first prove a local version of this. Given a coordinate chart $\vphi:U\to V$ of $\Si$ with open subsets $U\subset\R^2$ and $V\subset \Si$, and setting
\[
  H_f^\vphi:\cF_NU\to\R,\quad H_f^\vphi(x_1,\dots,x_N) = H_f\big(\vphi(x_1),\dots,\vphi(x_N)\big),
\]
we claim that
\begin{equation}\label{eq:Morse}
  Y^\vphi := \big\{f\in Y:\text{$H_f^\vphi$ is a Morse function}\big\}\quad \text{is a residual subset of $Y$,}
\end{equation}
Recall that a residual subset is a set whose complement is a countable union of closed and nowhere dense subsets. Consequently $Y^\vphi$ is a dense subset of $Y$ by Baire's category theorem. In order to see \eqref{eq:Morse} consider the map $F:\cF_NU\times Y \to \R^{2N}$ defined by
\[
  F(x,f) = \nabla H_f^\vphi(x) = \nabla H_0^\vphi(x) + \big(\nabla (f\circ\vphi)(x_1),\dots,(\nabla (f\circ\vphi)(x_N)\big)^\top
\]
where $x=(x_1,\dots,x_N)\in\cF_NU$. Clearly $F$ is of class $\cC^1$, and $\frac{\pa F}{\pa x}(x,f): \R^{2N}\to \R^{2N}$ is a Fredholm operator of index $0$ for every $(x,f)\in\cF_NU\times Y$. Moreover the derivative
\[
  DF(x,f): \R^{2N}\times Y\to \R^{2N},
\]
given by
\[
\begin{aligned}
  DF(x,f)[\xi,\phi] &= \nabla^2H_0^\vphi(x)[\xi] + \big(\nabla^2 (f\circ\vphi)(x_1)[\xi_1],\dots,(\nabla^2 (f\circ\vphi)(x_N)[\xi_N]\big)^\top\\
                         &\hspace{1cm} + \big(\nabla (\phi\circ\vphi)(x_1),\dots,(\nabla (\phi\circ\vphi)(x_N)\big)^\top
\end{aligned}
\]
is surjective: Given $v_1,\dots,v_N\in\R^2$ there exists $\phi\in Y$ such that $\nabla(\phi\circ\vphi)(x_i) = v_i$ for $i=1,\dots,N$ because $x\in\cF_NU$. Now \cite[Theorem~5.4]{Henry:2005} implies that
\[
  Y^\vphi = \{f\in Y: \text{$0$ is a regular value of $F(\,\cdot\,,f):\cF_NU\to\R^{2N}$}\}\quad\text{is a residual subset of $Y$.}
\]
 This proves \eqref{eq:Morse}.

Now we prove that $\cM$ is a dense subset of $Y$. Given $f_0\in Y$, the set
\[
  K_{f_0} := \{p\in\cF_N\Si: \nabla H_{f_0}(p)=0\}
\]
of critical points of $H_{f_0}$ is compact as a consequence of Lemma~\ref{lem:collision}. By \eqref{eq:Morse}, for each $p\in K_{f_0}$ there exist an open neighborhood $V_p\subset\cF_N\Si$ of $p$ such that
\[
  W_p := \{f\in Y: H_f|_{V_p} \text{ is a Morse function}\}\quad\text{is a residual subset of $Y$.}
\]
There exist $p^1,\dots,p^k\in K_{f_0}$ such that $K_{f_0} \subset \bigcup_{i=1}^k V_{p^i} =: V$. Then $W := \bigcap_{i=1}^k W_{p^i}$ is a dense subset of $Y$, and $H_f|_{V}$ is a Morse function for every $f\in W$. Applying Lemma~\ref{lem:collision} once more we see that $|\nabla H_{f_0}|$ is bounded away from $0$ on $\cF_N\Si\setminus  V$. It follows that $H_f$ is a Morse function for every $f\in W$ close to $f_0$.

The proof of b) proceeds analogously.
\end{altproof}

\begin{Rem}\label{rem:proof Morse}
  Setting
  \[
    H_\psi(p_1,\ldots,p_N) = \sum_{\mytop{i,j=1}{i\ne j}}^N \Ga_i\Ga_jG(p_i,p_j) + \Psi(p_1,\dots,p_N)
  \]
  for $\Psi\in Z:=\cC^2(\Si^N)$ one can argue as in the proof of Proposition~\ref{prop:generic-K} to see that
  \[
    \cN := \{\Psi\in Z: H_\Psi \text{ is a Morse function}\}
  \]
  is an open and dense subset of $Z$. First, $\cN$ is an open subset of $\cC^2(\Si^N)$ as a consequence of Lemma~\ref{lem:collision}. Secondly, for a coordinate chart $\vphi:U\to V$ of $\Si^N$ with open subsets $U\subset \R^{2N}$ and $V\subset\Si^N$ we set
  \[
    H_\Psi^\vphi := H_\Psi\circ \vphi: \vphi^{-1}(V\cap \cF_N\Si) \to \R 
  \]
  and claim that
  \[
    Z^\vphi := \{\Psi\in Z: H_\Psi^\vphi \text{ is a Morse function}\}\quad\text{is a residual subset of $Z$.}
  \]
  This follows from \cite[Theorem~5.4]{Henry:2005} applied to the map
  \[
    U \times Z \to \R^{2N},\quad (x,\Psi) \mapsto \nabla H_\Psi^\vphi = \nabla H_0^\vphi(x) + \nabla(\Psi\circ\vphi)(x).
  \]
  Finally, $\cN$ is a residual subset of $Z$ by the covering argument from the proof of Proposition~\ref{prop:generic-K}.
\end{Rem}

{\bf Acknowledgement:} The authors thank the reviewer for her/his careful reading and helpful remarks that improved the presentation of the paper.


{\sc Address of the authors:}\\

 \noindent
 Mathematisches Institut \\
 University of Giessen \\
 Arndtstr.\ 2 \\
 35392 Giessen \\
 Germany \\
 Mohameden.Ahmedou@math.uni-giessen.de\\
 Thomas.Bartsch@math.uni-giessen.de \\
 Tim.Fiernkranz@math.uni-giessen.de

\end{document}